\let\OLDthebibliography\thebibliography
\renewcommand\thebibliography[1]{
 \OLDthebibliography{#1}
 \setlength{\parskip}{1.4pt}
 \setlength{\itemsep}{0pt plus 0.2ex}
}
\DeclareMathOperator{\tinc}{inc}
\DeclareMathOperator{\tfix}{fix}
\DeclareMathOperator{\tails}{tails}
\DeclareMathOperator{\pullback}{pb}
\DeclareMathOperator{\classify}{B}
\DeclareMathOperator{\cell}{--cell--}
\DeclareMathOperator{\Top}{top}
\DeclareMathOperator{\Spectra}{Sp}
\newcommand{\Sp}{{ \Spectra^\ocal }}
\newcommand{\osp}{{ O(2) \Spectra^\ocal_\qq }}
\newcommand{\tsp}{{ \torus \Spectra^\ocal_\qq }}
\newcommand{\qsp}{{ \Spectra^\ocal_\qq }}
\newcommand{\wsp}{{ \Spectra^\ocal_\qq[W] }}
\newcommand{\sphspec}{{ \mathbb{S} }}
\newcommand{\torus}{{ \mathbb{T} }}
\newcommand{\dihedsp}{{ L_{e_\dcal \sphspec} \osp }}
\title{Rational $O(2)$--Equivariant Spectra}
\author{David Barnes \\
\footnotesize{Queen's University Belfast\footnote{
School of Mathematics and Physics,
Pure Mathematics Research Centre,
University Road,
Belfast, BT7 1NN,
United Kingdom
}} \\
\footnotesize{d.barnes@qub.ac.uk}
}
\date{July 31, 2016}
\begin{document}
\maketitle

\begin{abstract}
\noindent
The category of rational $O(2)$--equivariant cohomology theories
has an algebraic model $\acal(O(2))$, as established
by work of Greenlees. That is, there is an equivalence of categories
between the homotopy category of rational $O(2)$--equivariant spectra
and the derived category of the abelian model $D\acal(O(2))$.
In this paper we lift this equivalence of homotopy categories to the level of
Quillen equivalences of model categories.
This Quillen equivalence is also compatible with the
Adams short exact sequence of the algebraic model.
\end{abstract}

{\small \noindent \textbf{MSC:} 55N91, 55P42, 55P60 \\
\noindent \textbf{Keywords:} equivariant spectra, model categories, right Bousfield localisation,
ring spectra, algebraic models}

\pdfbookmark[1]{Contents}{toc}
\tableofcontents

\section{Introduction}

Equivariant cohomology theories are a fundamental tool for studying spaces with
a $G$--action.  To study these cohomology theories,
it is helpful to understand the $G$--spectra that represent them.
The homotopy category of $G$--spectra is particularly complicated.
It contains all the information of
the stable homotopy category, as well as equivariant information such as the
Burnside ring of $G$ and the group
cohomology of $G$. A standard and fruitful method to make this category easier
to study is to work rationally. Since the rational stable homotopy category
is equivalent to the category of graded rational vector spaces, we have removed
most of the topological complexity.
However, much of the interesting behaviour that comes from the group is preserved and made tractable.

The category of rational $G$--spectra has been classified
(via Quillen equivalences) in terms of
a simple algebraic category for a number of groups.
The case of finite groups has been covered by the author
\cite{barnesfinite} and Kedziorek \cite{kedziorekexceptional}.
The circle group $\torus=SO(2)$ has recently been completed
by the author, Greenlees, Kedziorek and Shipley in \cite{BGKS}.
The case of a torus group is considered in
Greenlees and Shipley \cite{tnqcore}.

In this paper we focus on the group $O(2)$, which is the simplest non-commutative
non-finite compact Lie group.
The paper \cite{greo2} gives an abelian model $\acal(O(2))$
for the homotopy category of rational $O(2)$--spectra.
By considering objects with a differential in $\acal(O(2))$
we can construct a model category $d \acal(O(2))$
and a Quillen equivalence between $d \acal(O(2))$ and
the model category of rational $O(2)$--spectra.
We call $d \acal(O(2))$ the \textbf{algebraic model for rational $O(2)$--spectra}.
This Quillen equivalence gives us a triangulated equivalence of the homotopy categories
and also tells us that all further homotopical
structures (such as homotopy limits or Toda brackets) are preserved by this equivalence.

The algebraic model is explicit and manageable so that constructing
objects or maps is straightforward.
Furthermore, there is an Adams
exact sequence relating maps in the homotopy category of rational $O(2)$--spectra
to the algebraic model, see
Theorems \ref{thm:toraladams} and \ref{thm:dihedraladams}.
The algebraic model splits into the product of two simpler categories.
The first, called the \textbf{toral part} $d \acal(\ccal)$,
comes from (the homotopy category of) the algebraic model for
$\torus$ along with a skewed action of $W=O(2)/\torus$,
see Definition \ref{def:toralmodel}.
Our work will also clarify an imprecision in \cite{greo2} regarding
the behaviour of $W$ on the toral part, see Remark \ref{rmk:behaviour}.
The second, called the \textbf{dihedral part} $dg \acal(\dcal)$,
behaves much more like the
case of a finite group (or an exceptional subgroup as in \cite{kedziorekexceptional}), see Definition \ref{def:dihedralmodel}.
The main theorem can be phrased as follows, where we denote rational $O(2)$--equivariant
(orthogonal) spectra by $\osp$.

\begin{theorem}
There is a zig-zag of Quillen equivalences between
\[
\osp
\quad \textrm{and} \quad
d \acal(\ccal) \times dg \acal(\dcal).
\]
\end{theorem}

This paper represents the prototype for other extensions
of a torus by a finite group.
Let $G$ be an extension of $\torus^r$ by a finite group,
then there is a notion of `toral' $G$--spectra
extending our notion of toral $O(2)$--spectra.
A study of the homotopy category of such spectra appears in
\cite{gretoralsupport} and that paper gives an abelian model for the homotopy category.
A similar method to Section \ref{sec:toral} and an extension of the results of
\cite{barnesmonoidal} should provide a classification
of toral $G$--spectra in terms of the algebraic model built from the
abelian model. The key fact is that a map of toral $G$--spectra is a weak equivalence
if and only if it forgets to a weak equivalence of $\torus^r$--spectra.


\subsection{Organisation}

The model category of rational
$O(2)$--spectra is recalled in Section \ref{sec:spectra}.
Theorem \ref{thm:mainsplitting}
splits the model category into two parts:
the model category of `toral spectra' and
the model category of `dihedral spectra', see Definition \ref{def:thecategories}.
This is the model category version of the
fact that the homotopy category splits into two pieces.

The model category of toral spectra only has homotopical information
coming from the finite cyclic groups and $\torus$ and hence behaves very much like
the model category of rational $\torus$--spectra.
In Section \ref{sec:toralmodel} we introduce the algebraic model for toral spectra.
The classification of toral spectra in terms of the algebraic model is given in
Section \ref{sec:toral}.
The method is an extension of the method of \cite{BGKS}.
The extra difficulty is accounting for the action of the
Weyl group $W= W_{O(2)} \torus =O(2)/\torus$.
The essential point is that the $\torus$--fixed points of a toral $O(2)$--spectrum
have the structure of a spectrum with $W$--action.

The model category of dihedral spectra contains the
homotopical information of rational $O(2)$--spectra
generated by the dihedral subgroups and $O(2)$.
In Section \ref{sec:dihedral} we introduce the algebraic model for dihedral spectra
and give the classification.
This part of the work is simpler than the toral case as
all the homotopical information is concentrated in degree zero,
see Lemma \ref{lem:dihedralcalc}.

\section{Rational \texorpdfstring{$O(2)$}{O(2)}--Spectra}\label{sec:spectra}

In this section we introduce a model category for rational $O(2)$--spectra and
show that it splits into the product of two localisations.
We also give some information on the group $O(2)$ and introduce some basic results on
$O(2)$--spectra.

\subsection{The group \texorpdfstring{$O(2)$}{O(2)} and model structures}

Let $D^h_{2n}$ denote the dihedral subgroup of order
$2n$ containing $h$, where $h$ is an element of $O(2) \setminus SO(2)$.
The closed subgroups of $O(2)$ are $O(2)$, $SO(2)$, the finite cyclic groups $C_n$ ($n \geqslant 1$)
and the finite dihedral
groups $D^h_{2n}$ for varying $h$.

For $H$ a closed subgroup of $O(2)$ we let $N_{O(2)}(H)$ denote the normaliser of $H$ in $O(2)$:
So $N_{O(2)}(H)$ is the largest subgroup of $O(2)$ in which $H$ is normal.
The Weyl group of $H$ in $O(2)$ is
\[
W_{O(2)}(H):=N_{O(2)}(H)/H
\]
The Weyl group
of $O(2)$ is the trivial group.
The Weyl group of $\torus=SO(2)$ is the group of order two,
which we call $W$
\[
W:=O(2)/SO(2)=W_{O(2)}(SO(2)).
\]
The normaliser of $D^h_{2n}$ in $O(2)$ is $D^h_{4n}$,
thus the Weyl group of $D_{2n}^h$ is isomorphic to $W$.
The finite cyclic groups are normal, hence the
Weyl group of $C_n$ is $O(2)/C_n \cong O(2)$.

Following \cite[Chapter V, Section 2]{lms86},
define $\fcal O(2)$ to be the set
of those subgroups of $O(2)$
with finite index in their normaliser
equipped with the Hausdorff topology.
This is an $O(2)$--space via the conjugation
action of $O(2)$ on its subgroups.
Let $C(\fcal O(2)/O(2), \qq)$ be the ring of continuous maps
from $\fcal O(2)/O(2)$ to $\qq$ considered as
a discrete space. By work of tom Dieck, see \cite[Lemma 2.10]{lms86},
there is an isomorphism of rings
\[
A(O(2)) \otimes \qq :=[\sphspec,\sphspec]^{O(2)} \otimes \qq \overset{\cong}{\longrightarrow}
C(\fcal O(2)/O(2), \qq)
\]
which sends $f \otimes q$ to
$(H) \mapsto q\deg(\Phi^H f)$ (the degree of the
$H$--fixed points of the map $f \co \sphspec \to \sphspec$).
We draw $\fcal O(2)/O(2)$
below as Figure \ref{phig}.
We will sometimes write $D_{2n}$ for $(D^h_{2n})$ the conjugacy class of
$D^h_{2n}$. The point $O(2)$ is a limit point
of this space.

\begin{figure}[!hbt]
\begin{center}
\setlength{\unitlength}{1cm}
\framebox[0.8\textwidth]{
\begin{picture}(6.5,3)(2,1)

\put(0,2){$\bullet$}

\put(0,3){$\bullet$}

\put(10,2){$\bullet$}
\put(5,2){$\bullet$}
\put(2.5,2){$\bullet$}
\put(3.3,2){$\bullet$}
\put(1.1,2.0){$\cdots$}

\put(-0.2,1.5){$O(2)$}
\put(-0.2,3.5){$SO(2)$}

\put(9.8,1.5){$D_{2}$}
\put(4.8,1.5){$D_{4}$}
\put(3.1,1.5){$D_{6}$}
\put(2.3,1.5){$D_{8}$}

\end{picture}}
\end{center}
\caption{\label{phig} $\fcal O(2)/O(2)$.}
\end{figure}

\begin{definition}
Define $\ccal$ to be the set consisting of the finite cyclic groups and $\torus$.
This set is a family in the sense that it is closed under conjugation and taking subgroups.
Let $\dcal$ be the complement of $\ccal$ in the set of all (closed) subgroups
of $O(2)$.
\end{definition}

\begin{definition}\label{def:o2idem}
We define idempotents of $C(\fcal O(2)/O(2), \qq)$ as follows:
$e_\ccal$ is the characteristic function of $\torus$, $e_\dcal :=e_\ccal-1$
and $e_n$ is the characteristic function of $D_{2n}$ for each $n \geqslant 1$.
We also let $f_n = e_\dcal-\Sigma_{k=1}^{n-1} e_k$.
\end{definition}

Our base category of spectra $\Sp$ is the category of orthogonal spectra,
equipped with the stable model structure.
Let $O(2) \Sp$ be the model category of $O(2)$--equivariant orthogonal spectra
(defined over a complete $O(2)$--universe $\ucal$).
This is a proper, cellular, stable model structure
where weak equivalences are those maps $f$ such that $\pi_*^H(f)$ is an isomorphism
for all closed subgroups $H$ of $O(2)$.
See \cite{mm02} for details.

Similarly we have $\torus \Sp$, the model category of $\torus$--equivariant orthogonal spectra.
We will index this category of spectra over the universe $i^* \ucal$
(so our $\torus$--equivariant spectra are indexed on $\torus$--representations
of the form $i^* V$, for $V$ an $O(2)$--representation). This non-standard choice
of universe is justified by \cite[Section V.2 and Remark 1.10]{mm02}. In particular
the homotopy category is the usual $\torus$--equivariant stable homotopy category.
One advantage of this convention is that the
forgetful functor from $O(2)$--spectra to $\torus$--spectra is given
by $(i^* X)(i^*V): = i^*(X(V))$, that is, the space $X(V)$ with $O(2)$--action
forgotten to an $\torus$--action, for $X$ an $O(2)$--spectrum.

Following \cite[Section 5]{barnessplitting} and using \cite[Theorem IV.6.3]{mm02},
we can create a new model structure on
$O(2) \Sp$ by localising at a rational sphere spectrum $S^0 \qq$.
The spectrum $S^0 \qq$ can be built as a non-equivariant spectrum and inflated
to an $O(2)$--spectrum or built directly in $O(2)$--spectra.
We call the weak equivalences of the localised model structure
\textbf{rational equivalences}: those maps $f$
such that $\pi_*^H(f) \otimes \qq$ is an isomorphism for all closed subgroups $H$
of $O(2)$.
We call this the \textbf{rational model structures}.
Analogous model structures exist for $\torus$--spectra and non-equivariant spectra.

\begin{definition}
Let $\osp$ be the category of $O(2)$--equivariant orthogonal spectra
equipped with the rational model structure. This category of spectra is indexed on
the complete $O(2)$--universe $\ucal$.

Let $\tsp$ be the category of $\torus$--equivariant orthogonal spectra
equipped with the rational model structure. This category of spectra is indexed on
the universe $i^* \ucal$.

Let $\qsp$ be the category of orthogonal spectra
equipped with the rational model structure. This category of spectra is indexed on
the universe $\rr^\infty$.
\end{definition}

We will also have cause to use a category of naive $W=O(2)/\torus$--equivariant spectra.
\begin{definition}
Let $\wsp$ denote the category of $W$--objects and $W$--maps in $\Sp$, indexed
on the universe $\rr^\infty$.
This category is equipped with the `free' model structure lifted from $\qsp$,
using the functor $W_+ \smashprod (-)$.
Hence a map is a weak equivalence or fibration if it is so in $\qsp$
when the $W$--action is forgotten.
\end{definition}

\subsection{Splitting rational \texorpdfstring{$O(2)$}{O(2)}--spectra}\label{sec:split}

We know by \cite{greo2} and \cite[Section 6]{barnessplitting} that the
homotopy theory of rational $O(2)$--spectra splits into two pieces.
Using the idempotents of Definition \ref{def:o2idem}
we define $e_\ccal \sphspec$ as the homotopy colimit (mapping telescope)
of
\[
\xymatrix{
\sphspec \ar[r]^{e_\ccal} & \sphspec \ar[r]^{e_\ccal} & \sphspec \ar[r]^{e_\ccal} & \dots
}
\]
and we require that this spectrum be cofibrant (either by choice of construction
or by replacing it with a cofibrant replacement).
Similarly we have $(1-e_\ccal) \sphspec \simeq e_\dcal \sphspec$.

We can then Bousfield localise
the model category of rational $O(2)$--spectra at these objects
to obtain $L_{e_\ccal \sphspec} \osp$
and $L_{e_\dcal \sphspec} \osp$ using \cite[Section IV.6]{mm02}.
The weak equivalences of
$L_{e_\ccal \sphspec} \osp$ are those maps $f$ such that
$e_\ccal \sphspec \smashprod f$ is a rational equivalence and similarly so for
$L_{e_\dcal \sphspec} \osp$.
These are cofibrantly generated, proper, simplicial stable model categories.
The result below is \cite[Corollary 6.3]{barnessplitting}.

\begin{theorem}\label{thm:mainsplitting}
The adjoint pair of the diagonal functor $\Delta$
and the product functor $\Pi$
induces a symmetric monoidal Quillen equivalence.
\[
\Delta :
\osp
\adjunct
L_{e_\ccal \sphspec}  \osp \times L_{e_\dcal \sphspec}  \osp
: \Pi
\]
\end{theorem}

We can identify these localised homotopy categories more clearly.
Note that for any $X$ and $Y$ in $ \osp$, the abelian group
$[X,Y]^{O(2)}_\qq $ is a module
over the ring $[\sphspec,\sphspec]^{O(2)}_\qq$ via the smash product.
Hence we have the following isomorphisms of sets of maps in the
homotopy category.
\[
\renewcommand*{\arraystretch}{1.3}
\begin{array}{rcl}
[X,Y]^{O(2)}_\qq
& \cong &
e_\ccal [X,Y]^{O(2)}_\qq \times e_\dcal [X,Y]^{O(2)}_\qq \\
e_\ccal [X,Y]^{O(2)}_\qq
& \cong &
\ho L_{e_\ccal \sphspec} \osp (X,Y)  \\

e_\dcal [X,Y]^{O(2)}_\qq
& \cong &
\ho L_{e_\dcal \sphspec} \osp (X,Y)
\end{array}
\]

We can improve our description of $L_{e_\ccal \sphspec}  \osp$
by describing $e_\ccal \sphspec$  in terms of a suspension spectrum.
Let $E \ccal$ denote the universal $O(2)$--space
corresponding to the family $\ccal$,
so $E \ccal^H$ is non-equivariantly contractible
for each $H \in \ccal$ and is the empty set for $H \notin \ccal$.
This is an $O(2)$--CW--complex and is built from cells of the form
$O(2)/K_+$ for $K \in \ccal$.
Define $\widetilde{E} \ccal$ via the cofibre sequence of
$O(2)$--spaces,
\[
E \ccal_+ \to S^0 \to \widetilde{E} \ccal.
\]
By considering geometric fixed points, it is easy to check that
the composite map $E \ccal_+ \to S \to e_\ccal \sphspec$ of $O(2)$--spectra
is a weak equivalence. It follows that the weak equivalences of
$L_{e_\ccal \sphspec} \osp$ are those maps $f$ such that
$E \ccal_+ \smashprod f$ is a rational equivalence.
By \cite[Proposition IV.6.7]{mm02} it follows that the
weak equivalences are those maps $f$ such that
$i^* f$ is a weak equivalence of rational $\torus$--spectra.

We will find it convenient to use a slightly different model structure on
$\osp$ to model $\ho L_{e_\ccal \sphspec} \osp$.
We take the following construction from \cite[Theorem IV.6.5]{mm02}.
The (non-rational) stable model structure on $O(2)$--spectra
has sets of generating cofibrations and acyclic cofibrations
obtained by applying
the shifted suspension functors $F_V$ to spaces of the form
$O(2)/H_+ \smashprod A$ for $H \leqslant O(2)$, $V$ a representation of $O(2)$
and $A$ either a sphere or a disc.
If we restrict ourselves to only those with
$H \leqslant \torus$, we obtain a new model structure
on $O(2)$--spectra that is stable and cellular.
In particular, a map $f$ is a weak equivalence or fibration in
this new model structure if and only if $i^* f$
is a weak equivalence or fibration of $\torus$--spectra.
We rationalise this model structure as above and
denote it $\ccal \osp$.
From the descriptions of the weak equivalences
it follows immediately that the identity functor from
$\ccal \osp$ to
$L_{e_\ccal \sphspec} \osp$ is the left adjoint of a Quillen equivalence.

\begin{definition}\label{def:thecategories}
We call $\ccal \osp$
the model category of \textbf{toral $O(2)$--spectra}.
We call $L_{e_\dcal \sphspec} \osp$
the model category of \textbf{dihedral $O(2)$--spectra}.
\end{definition}

We rephrase the splitting result.
\begin{corollary}\label{cor:finalsplit}
The model category of rational $O(2)$--spectra is
Quillen equivalent to
\[
\ccal \osp \times L_{e_\dcal \sphspec}  \osp.
\]
\end{corollary}

In Section \ref{subsec:fixedpoints}
we will make much use of the $\torus$--fixed points functor, so we introduce
that functor and discuss how it acts on the model category $\ccal \osp$.
The functor $(-)^{\torus}$ of \cite[Section V.3]{mm02} first restricts
an $O(2)$--spectrum indexed on a complete $O(2)$--universe $\ucal$ to
$\rr^\infty$, then applies the space--level fixed point functor levelwise.

\begin{lemma}\label{lem:fixed}
The (categorical) $\torus$--fixed points functor induces a Quillen pair
\[
\varepsilon^* :
\wsp
\adjunct
\ccal \osp
: (-)^{\torus}
\]
\end{lemma}
\begin{proof}
We first consider the adjunction before rationalising.
On the left a map is a fibration if and only if it forgets to a fibration
of non-equivariant spectra. A map on the right is a fibration if and
only if it forgets to a fibration of $\torus$--spectra.
The functor $(-)^{\torus}$ is a right Quillen functor from
$\torus$--spectra to spectra by \cite[Proposition 3.4]{mm02}
hence we have a Quillen pair before localisation.

The adjunction extends to the rationalised categories as
the rational sphere spectrum $S^0 \qq$ in each category is given by applying the
appropriate inflation functor from non-equivariant spectra.
\end{proof}

The forgetful and fixed points functors interact well,
as the commutative diagram below shows.
Analogues of this diagram will appear throughout
Section \ref{sec:toral}.

\begin{proposition}\label{prop:commforgetful}
There is a diagram of Quillen functors as below,
in which both the square of fixed point and forgetful functors commute,
as does the square of inflation and forgetful functors.
\[\xymatrix@R+0.1cm@C+2cm{
\wsp
\ar@<0.1cm>[r]^{\varepsilon^* }
\ar@<0.1cm>[d]^{i^*}
&
\ccal \osp
\ar@<0.1cm>[d]^{i^*}
\ar@<0.1cm>[l]^{(-)^{\torus}}
\\
\qsp
\ar@<0.1cm>[r]^{\varepsilon^*}
&
\tsp
\ar@<0.1cm>[l]^{(-)^{\torus}}
}\]
Then both functors $i^*$ preserve weak equivalences, cofibrations and fibrations.
Furthermore if $i^* f$ is a weak equivalence (or fibration), then
$f$ is a a weak equivalence (or fibration).
\end{proposition}
\begin{proof}
The two commutativity statements follow directly from the definitions.
We have already discussed weak equivalences and fibrations for both functors $i^*$
in the non-rationalised case. These extend to the rational versions
as $S^0 \qq$ can be constructed in $\Sp$ and then inflated to
an equivariant spectrum in any of the other three categories.

For cofibrations, consider a generating cofibration of
$\ccal \osp$,
\[
F_V \left( O(2)/H_+ \smashprod S^{n-1}_+ \right) \longrightarrow
F_V \left( O(2)/H_+ \smashprod D^{n}_+ \right)
\]
for $H$ a subgroup of $\torus$ and $V$ a representation of $O(2)$.
Applying $i^*$ to this gives
\[
F_{i^* V} \left( (\torus/H_+ \vee j^* \torus/H_+) \smashprod S^{n-1}_+ \right)
\longrightarrow
F_{i^* V} \left( (\torus/H_+ \vee j^* \torus/H_+) \smashprod D^{n}_+ \right)
\]
where $j^* \torus/H_+$ denotes the space $\torus/H$ but with the inverse action of
$\torus$ (so $t \in \torus$ acts by $t^{-1}$).
Since this map is a cofibration of $\torus$--spectra, the statement follows.
A similar argument holds in the case of $\wsp$.
\end{proof}

\section{The toral model}\label{sec:toralmodel}

In this section we define $d\acal(\ccal)$, the algebraic model for toral spectra
and explain how it relates to $d\acal(\torus)$, the algebraic model for rational $\torus$--spectra.

\subsection{The model \texorpdfstring{$\acal(\torus)$}{A(T)}}\label{subsec:the model}
The algebraic category for the homotopy category of rational $\torus$--spectra
is established in \cite{gre99}.
We adapt that category to the toral case and explain how to relate it to the
$\torus$--case. Our starting point is the category of
chain complexes with an action of $W$.

\begin{definition}
The category $\ch(\qq[W])$ is the category
of rational chain complexes that have an action of the group of
order two. This is a monoidal category
with tensor product given by tensoring over $\qq$
and using the diagonal $W$--action. The unit
of this product is $\qq$ in degree zero with trivial
$W$--action.
\end{definition}

There is a proper cofibrantly generated model structure
on this category by \cite[Proposition 4.2.13]{hov99}.
The fibrations are the surjections and the weak equivalences
are the homology isomorphisms. The cofibrations are dimensionwise split injections with
cofibrant cokernel.
Let $S^{n-1}$ be the chain complex with $\qq$ in degree $n-1 \in \zz$ and zeroes
elsewhere and the $D^n$ be the chain complex with $\qq$ in degrees
$n$ and $n-1$ (with the identity as the differential between these degrees)
and zeroes elsewhere.
The generating cofibrations are given by the inclusion maps
$S^{n-1} \otimes \qq[W] \to D^n \otimes \qq[W]$
and the acyclic cofibrations are given by
$0 \to D^n \otimes \qq[W]$ for $n \in \zz$.

Since $\qq$ is a retract of $\qq[W]$ we see that
$S^{n-1} \otimes \qq \to D^n \otimes \qq$ is also a
cofibration of $\ch(\qq[W])$. Hence the cofibrant
objects do not have to be $W$--free.

The forgetful functor from $\ch(\qq[W])$ to
$\ch(\qq)$ is the right adjoint of a
strong monoidal Quillen pair. The left adjoint
sends a chain complex
$X$ to $X \oplus X$ with $W$ acting as the exchange of factors map.

It is routine to check that this category is a symmetric monoidal
model category that satisfies the monoid axiom.
We construct a commutative monoid in $\ch(\qq[W])$.

\begin{definition}
Let $\ocal_\fcal$ be the graded ring of operations $\prod_{n \geqslant 1} \qq [c_n]$
with $c_n$ of degree $-2$. This ring has trivial differential.

The group $W$ acts on this graded ring (via ring homomorphisms),
it is defined by $w c_n =-c_n$.
We thus have a map of graded rings (without $W$--action)
\[
w \co \ocal_\fcal \to \ocal_\fcal
\]
and a change of rings functor
$w^*$ from $\ocal_\fcal \leftmod$ to itself.
For a module $N$, $w^*N$ is the same underlying
set, but now each $c_n$ acts as $-c_n$.
\end{definition}

We use the notation
\[
\ecal^{-1} \ocal_\fcal = \colim_{n \geqslant 1}  \ocal_\fcal[c_1^{-1}, \dots, c_n^{-1}].
\]
It is easy to see that $\ecal^{-1} \ocal_\fcal$ is also a graded ring with $W$--action.
As a vector space, $(\ecal^{-1} \ocal_\fcal)_{2k}$ is
$\prod_{a \geqslant 1} \qq $
for $k \leqslant 0$ and is $\oplus_{a \geqslant 1} \qq$ for
$k >0$.
For any $\ocal_\fcal$--module $N$, we define $\ecal^{-1} N$ to be
$\ecal^{-1} \ocal_\fcal \otimes_{\ocal_\fcal} N$. The tensor product
has the diagonal action of $W$.

\begin{definition}\label{def:toralmodel}
An object $A= (\beta \co N \to \ecal^{-1} \ocal_\fcal \otimes U)$
of $d\acal(\ccal)$ consists of the following data:
\begin{itemize}[noitemsep]
\item an $\ocal_\fcal$--module $N$ in the category $\ch(\qq[W])$,
\item an object $U$ of $\ch(\qq[W])$,
\item a map $\beta$ of $\ocal_\fcal$--modules in the category $\ch(\qq[W])$,
\item with the requirement that
$\ecal^{-1} \beta$ is an isomorphism.
\end{itemize}

Let $B= (\beta' \co N' \to \ecal^{-1} \ocal_\fcal \otimes U')$
be another object of $d\acal(\ccal)$.
A map $(\theta, \phi) \co A \to B$ in this category consists of the following data:
\begin{itemize}[noitemsep]
\item a map $\theta \co N \to N'$ of $\ocal_\fcal$--modules
in the category $\ch(\qq[W])$
\item a map $\phi \co U \to U'$
in the category $\ch(\qq[W])$
\item with the requirement that the obvious square
involving the structure maps commutes.
\end{itemize}

We call $d \acal(\ccal)$ the \textbf{algebraic
model for toral spectra}.
The subcategory of objects with zero differentials in all places is called
$\acal(\ccal)$,  the \textbf{abelian model for toral spectra}.
\end{definition}

We let $S^0 =  \left( \ocal_\fcal \to \ecal^{-1} \ocal_\fcal \otimes \qq \right)$
where $\qq$ has trivial $W$--action.
This is the unit of a monoidal product on $d \acal(\ccal)$
(although we will make no direct use of that in this paper).

The abelian and algebraic models for rational $\torus$--equivariant spectra
have similar descriptions.

\begin{definition}
The category $d\acal(\torus)$ is defined as in
Definition \ref{def:toralmodel} but using $\ch(\qq)$
instead of $\ch(\qq[W])$. We call this
the \textbf{algebraic model for $\torus$--spectra}.

The full subcategory of $d \acal(\torus)$ consisting of
objects with zero differentials is called
$\acal(\torus)$, the \textbf{abelian model for $\torus$--spectra}.
\end{definition}

There is an obvious forgetful functor relating
$d\acal(\torus)$ and $d\acal(\ccal)$.
The results of Section \ref{sec:toral} will show that
this forgetful functor is the algebraic version of
the forgetful functor from $\ccal \osp$ to $\tsp$.

\begin{lemma}\label{lem:leftadjoint}
There is an adjoint pair relating
$d\acal(\torus)$ and $d\acal(\ccal)$. The left adjoint $\mathbb{D}$
takes
\[
\beta \co N \to \ecal^{-1} \ocal_\fcal \otimes U
\]
in $\acal$ to the following composite
\[
N \oplus w^* N
\overset{\beta \oplus w^* \beta}{\longrightarrow}
(\ecal^{-1} \ocal_\fcal \otimes U) \oplus
(w^* \ecal^{-1} \ocal_\fcal \otimes U)
\overset{\id \oplus w}{\longrightarrow}
\ecal^{-1} \ocal_\fcal \otimes U \oplus
\ecal^{-1} \ocal_\fcal \otimes U
\]
The $W$--action then simply swaps the two summands.
The right adjoint $i^*$  is
the forgetful functor from $d\acal(\ccal)$ to $d\acal(\torus)$.
\end{lemma}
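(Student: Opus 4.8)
The plan is to identify $\mathbb{D}$ as an induction functor and $i^*$ as the corresponding restriction, carried out compatibly on both pieces of data (nub and vertex) of an object of $\acal$, and then to verify the adjunction $\mathbb{D} \dashv i^*$ directly by exhibiting the unit and checking its universal property.

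First I would check that $\mathbb{D}$ genuinely lands in $\acal(\cscr)$. The underlying data of $\mathbb{D}A$ are the graded $\qq[W]$--module $N \oplus j^* N$ with $W$ swapping the summands, the $W$--representation $U \oplus U$ with $W$ swapping, and the structure map $(\id \oplus j)\circ(\beta \oplus j^*\beta)$. One verifies that the swap is compatible with the $\ocal_\fcal$--action in the sense required for a module over the monoid $\ocal_\fcal$ in $\ch_\qq[W]$: this holds precisely because the two $\ocal_\fcal$--module structures on the two summands differ by $j$, so the relation $\sigma(rx) = j(r)\sigma(x)$ is automatic, and similarly for the target. The only genuine point is that $\ecal^{-1}$ of the structure map is an isomorphism. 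This follows because $j$ sends each Euler class to a unit multiple of itself, so inverting $\ecal$ commutes with $j^*$ and $\ecal^{-1}(\beta \oplus j^*\beta) = \ecal^{-1}\beta \oplus j^*(\ecal^{-1}\beta)$ is an isomorphism, while $\id \oplus j$ becomes an isomorphism after inverting $\ecal$ since $j$ induces a ring automorphism of $\ecal^{-1}\ocal_\fcal$. Functoriality of $\mathbb{D}$, and of the forgetful functor $i^*$, is then immediate.

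Next I would produce the adjunction. Take the unit $\eta_A \co A \to i^*\mathbb{D}A$ to be inclusion of the first summand on both nub and vertex; this is visibly a map of $\acal$. Given $B = (\beta' \co M \to \ecal^{-1}\ocal_\fcal \otimes V)$ in $\acal(\cscr)$, with $W$--actions written $\sigma_M$ and $\sigma_V$, and a map $\phi = (\phi_N, \phi_U) \co A \to i^*B$, I claim there is a unique $\widetilde\phi \co \mathbb{D}A \to B$ with $i^*\widetilde\phi \circ \eta_A = \phi$; it is given on nubs by $(n,m) \mapsto \phi_N(n) + \sigma_M\phi_N(m)$ and on vertices by $(u,u') \mapsto \phi_U(u) + \sigma_V\phi_U(u')$. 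Uniqueness is forced: the restriction to the first summand is prescribed and $W$--equivariance then pins down the value on the second summand. One then checks that $\widetilde\phi$ is $\ocal_\fcal$--linear over $\qq[W]$ (here $j(-c_H) = c_H$ exactly absorbs the twist in the $\ocal_\fcal$--action on $j^*N$), is $W$--equivariant (using $\sigma_M^2 = \id$), and is compatible with the structure maps, the last point following from the corresponding compatibility for $\phi$ together with $W$--equivariance of $\beta'$. Naturality of $\phi \mapsto \widetilde\phi$ in $A$ and $B$ is routine, and the same argument applies verbatim to the differential versions $\partial\acal$ and $\partial\acal(\cscr)$, since both $\mathbb{D}$ and $i^*$ commute with the forgetful functors to the underlying non--differential categories. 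The one place demanding care throughout is the bookkeeping of the $j$--twist and the signs in the $W$--action, i.e.\ confirming that ``$W$ swaps the two summands'' really does define a module over $\ocal_\fcal$ in $\ch_\qq[W]$ and that the adjoint maps respect this structure.
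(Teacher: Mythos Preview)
The paper states this lemma without proof, treating the construction as routine; it is a standard induction--restriction adjunction carried through the two pieces of data (nub and vertex) defining an object of $\acal$. Your proposal is correct and supplies exactly the verifications the paper omits: that the swap action makes $N \oplus j^*N$ an $\ocal_\fcal$--module in graded $\qq[W]$--modules, that the $\ecal$--isomorphism condition is preserved, and that the unit $\eta_A$ has the required universal property.

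One small remark: the map $\id \oplus j$ is already an isomorphism before inverting $\ecal$, since $j \co \ocal_\fcal \to \ocal_\fcal$ is itself a ring automorphism (it sends each $c_H$ to $-c_H$). So in your check that $\ecal^{-1}$ of the structure map is an isomorphism, only the factor $\beta \oplus j^*\beta$ needs attention, and that is handled exactly as you say.
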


\begin{rmk}\label{rmk:behaviour}
In \cite[Corollary 3.2]{greo2} the algebraic model for the toral part is described
as $\acal(\torus)$ with a $W$--action. This is not sufficiently precise, the more correct
statement is that the algebraic model for the toral part is $\acal(\ccal)$
as in Definition \ref{def:toralmodel}.
See Lemma \ref{lem:homotopycalc} for the calculation which shows how
the given action of $W$ on $\ocal_\fcal$ is obtained from topological data.

The problem with the proof of the corollary is as follows,
let $Z$ be a $\torus$--spectrum (or a $\torus$--space)
with a $\torus$-equivariant map of order two $z \co Z \to Z$.
Choose a reflection $\hat{w} \in O(2)$ and let
$R_{\hat{w}} \co O(2)_+ \to O(2)_+$ be right multiplication by $\hat{w}$.
Then the map
\[
R_{\hat{w}} \smashprod z \co O(2)_+ \smashprod_{\torus} Z \longrightarrow O(2)_+ \smashprod_{\torus} Z
\]
is not well defined due to the equalisation of the $\torus$--actions.
This is easily seen at the space level, where
$R_{\hat{w}} \smashprod z ( [\sigma, x] ) = [\sigma \hat{w}, z(x)]$.

Instead, let $j \co \torus \to \torus$ be the inversion map
and $j^*$ be the change of groups functor. If
$z$ is a map from $Z$ to $j^* Z$,
such that $j^* z \circ z$ is the identity,
then the map $R_{\hat{w}} \smashprod z$ is well defined.
Obviously, if $Y$ is an $O(2)$--spectrum then $i^*Y$ has such a map,
given by the action of $\hat{w}$.
With this interpretation of $W$--actions,
\cite[Proposition 3.1 and Corollary 3.2]{greo2} are correct.
This idea of skewed actions on a category is considered in more
detail in \cite[Chapters 7 and 8]{barnesthesis}.
\end{rmk}

\subsection{Model structures on
\texorpdfstring{$d\acal(\ccal)$}{dA(C)}}\label{subsec:toralmodel}

We use the dualisable model structure
on $d \acal(\torus)$, see \cite[Theorem 6.6]{barnesmonoidal}.
This is a proper monoidal
model category that satisfies the monoid axiom
and whose weak equivalences are the homology isomorphisms.
We can lift this model structure to the toral case using
the lifting lemma \cite[Theorem 11.3.2]{hir03}
and the adjoint pair $(\mathbb{D}, i^*)$.

\begin{theorem}
There is a model structure on $d \acal(\ccal)$ where
the weak equivalences are the homology isomorphisms
and the fibrations are those maps which forget to
fibrations in the dualisable model structure on
$d \acal(\torus)$.
This model structure is proper, cofibrantly generated,
monoidal and satisfies the monoid axiom.
The generating cofibrations and acyclic cofibrations
are given by applying
$\mathbb{D}$ to the generating sets for the
dualisable model structure on
$d \acal(\torus)$.
\end{theorem}

Note that the ring map $w \co \ocal_\fcal \to \ocal_\fcal$
induces a map $w \co S^0 \to w^* S^0$.
It follows that $S^0$ (with $W$--action) is a retract of
$\mathbb{D} S^0$ and hence is cofibrant in
$d \acal(\ccal)$.

In \cite{gre99}, Greenlees constructs a functor $\pi_*^{\acal}$
from the homotopy category of rational $\torus$--spectra
to $\acal(\torus)$.
For a rational $\torus$--spectrum $X$, let $\pi^\acal_*(X)$
be the following object of $\acal$.
For details of the spectra $DE \fcal_+$ and $\widetilde{E} \fcal$
see Definition \ref{def:universalspaces}. The spectrum $\Phi^{\torus} X$
is the geometric $\torus$--fixed points of $X$.
\[
\pi^\acal_*(X) = \big(
\pi_*^{\torus} (X \smashprod DE \fcal_+)
\longrightarrow
\pi_*^{\torus} (X \smashprod DE \fcal_+ \smashprod \widetilde{E} \fcal)
\cong \ecal^{-1} \ocal_\fcal \otimes \pi_*(\Phi^{\torus} X) \big)
\]
Since $DE\fcal_+$ and $\widetilde{E} \fcal$
can be constructed as $O(2)$--spectra, we can
extend $\pi^\acal_*$ to the toral case
by keeping track of the $W=O(2)/\torus$ action on
$\pi_*^{\torus} (X)$ for $X$ an $O(2)$--spectrum.
This functor fits into an Adams short exact sequence for the toral part, the proof
follows the same pattern as \cite[Theorem 5.6.6]{gre99}.

\begin{theorem}\label{thm:toraladams}
For $X$ and $Y$ rational $O(2)$--spectra,
there is an Adams short exact sequence as below,
where $[-,-]^{\ccal}_*$ denotes maps in the homotopy
category of toral spectra.
\[
0 \to
\ext_{\acal(\ccal)} (\pi_*^{\acal}(\Sigma X),\pi_*^{\acal}( Y))
\to
[X,Y]^{\ccal}_*
\to
\hom_{\acal(\ccal)} (\pi_*^{\acal}(X),\pi_*^{\acal}(Y))
\to 0
\]
\end{theorem}

\section{Toral spectra}\label{sec:toral}

In this section we show that the model category of
toral $O(2)$--spectra,  $\ccal \osp$,
is Quillen equivalent to the category $d\acal(\ccal)$.
The method is an extension of \cite{BGKS}.

The first step is to separate that part of toral $O(2)$--equivariant
homotopy theory that is determined by the finite cyclic subgroups
from that determined by $\torus$.
Proposition \ref{prop:Smodulesequivalence} gives this separation,
see \cite[Proposition 3.2.5]{BGKS} for the $\torus$--equivariant analogue
and further explanation of the underlying idea.

The next step is to take $\torus$--fixed points, so that we are now working
with non-equivariant spectra. This removal of equivariance is achieved in
Corollary \ref{cor:removeequivariancecell},
which is a generalisation of \cite[Corollary 3.3.6]{BGKS}.
The major difference is that the
$\torus$--fixed points of an $O(2)$--spectrum
define a spectrum with a $W$--action, see Lemma \ref{lem:fixed}.

The third step is to replace categories based on rational spectra (with an action of $W$)
with categories based on chain complexes (with an action of $W$) using \cite{shiHZ},
see Theorem \ref{thm:movetoalgebra}.
The remaining steps are analogues of \cite[Section 4]{BGKS},
where we complete our series of Quillen equivalences
with the model $d \acal (\ccal)$ by removing the localisations and
cellularisations in our constructions.
See Propositions \ref{prop:simplemodel} and Theorem \ref{thm:gammaequiv}.

\subsection{Isotropy separation}\label{subsec:separate}

We briefly recap the notion of a diagram of model categories
and the category of generalised diagrams.
We let $\pscr$ denote the pullback category
$\bullet \to \bullet \leftarrow \bullet$.

\begin{definition}
A \textbf{$\pscr$--diagram of model categories}
$R^\bullet$ is a pair of Quillen pairs
\[
\begin{array}{rcl}
L:
\acal
&
\adjunct
&
\bcal : R \\
F:
\ccal
&
\adjunct
&
\bcal : G \\
\end{array}
\]
with $L$ and $F$ the left adjoints.
We will usually draw this as the diagram below.
\[
\xymatrix{
\acal
\ar@<+1ex>[r]^L
&
\bcal
\ar@<+0.5ex>[l]^R
\ar@<-0.5ex>[r]_G
&
\ccal
\ar@<-1ex>[l]_F
}
\]
\end{definition}

We can then define the category of generalised diagrams in $R^\bullet$.
This is sometimes also called the category of sections.
\begin{definition}
Given a $\pscr$--diagram of model categories $R^\bullet$ as above,
we can define a new category, $R^\bullet \leftmod$.
The objects are pairs of morphisms in $\bcal$:
$\alpha \co La \to b$ and $\gamma \co Fc \to b$.
We write such an object as $(a,\alpha, b, \gamma, c)$.
A morphism in $R^\bullet \leftmod$ from
$(a,\alpha, b, \gamma, c)$ to $(a',\alpha', b', \gamma', c')$
is a triple of maps
$x \co a \to a'$ in $\acal$,
$y \co b \to b'$ in $\bcal$,
$z \co c \to c'$ in $\ccal$ such that we have a commuting diagram in $\bcal$
\[
\xymatrix{
La \ar[r]^\alpha
\ar[d]^{Lx}
& b
\ar[d]^y
& Fc \ar[l]_\gamma
\ar[d]^{Fz} \\
La' \ar[r]^{\alpha'}
& b'
& Fc' \ar[l]_{\gamma'}
}
\]
\end{definition}

If each category in the diagram $R^\bullet$ is proper and cellular, then
the category $R^\bullet \leftmod$ has a proper and cellular model structure with weak equivalences
and cofibrations defined objectwise by \cite[Proposition 3.3]{gsmodules}.

We can separate the homotopical information of toral $O(2)$--spectra
into three parts. The first part takes care of the homotopical information coming from
the finite cyclic subgroups. The second
part deals with the homotopical information coming from $\torus$.
The third part is a comparison term.
We have already removed the behaviour of the dihedral groups
in Theorem \ref{thm:mainsplitting}. The first step is to carefully construct a
commutative ring spectrum with some special properties.

\begin{definition}\label{def:universalspaces}
Let $\fcal$ be the collection of finite cyclic subgroups of $O(2)$.
There is a universal space for this family called $E \fcal$
where $E \fcal^H$ is non-equivariantly contractible
for each finite cyclic subgroup $H$
and $E \fcal^{K} = \emptyset$ for all other subgroups $K$.
This is an $O(2)$--CW--complex and is built from cells of the form
$O(2)/K_+$ for $K \in \fcal$.
We define $\widetilde{E} \fcal$ via the cofibre sequence of
$O(2)$--spaces,
\[
E \fcal_+ \to S^0 \to \widetilde{E} \fcal.
\]
We define $D E \fcal_+$ to be $F(E \fcal_+, N^\# S)$,
where $N^\#$ is the lax monoidal right adjoint
described in \cite[Theorem IV.3.9]{EKMM97}
from $O(2)$--equivariant EKMM $S$--modules to $O(2) \Sp$.
\end{definition}

\begin{lemma}
The spectrum $D E \fcal_+$ is a commutative ring spectrum
that is fibrant in the category of
toral $O(2)$--equivariant orthogonal spectra. This forgets
to the commutative ring $\torus$--spectrum $D E \fcal_+$ constructed
in \cite[Definition 3.2.2]{BGKS}.
\end{lemma}
\begin{proof}
The ring spectrum constructed in \cite{BGKS} is made using the
same process as above, but starting in $\torus$--equivariant $S$--modules.
Hence the statement about $i^* DE \fcal_+$ follows immediately.
The spectrum $DE \fcal_+$ is fibrant in $O(2) \Sp$, since
$N^\# S$ is fibrant. Hence $DE \fcal_+$ is fibrant in $\ccal O(2) \Sp$
(recall the identity functor $\ccal O(2) \Sp \to O(2) \Sp$ is a left Quillen functor).
\end{proof}

Note that we do not require $DE \fcal_+$ to be fibrant in $\ccal \osp$:
we do not need it to be rational, only that its $\torus$--fixed points
are weakly equivalent to its derived $\torus$--fixed points.
That is, we need
\[
\pi_*(DE \fcal_+^\torus) \cong \pi_*^\torus(DE \fcal_+)
\]
which holds as $DE \fcal_+$  is fibrant in $O(2) \Sp$.

From this ring spectrum we can make three model categories.
In the following, whenever we have a ring object $A$ in a model category
$\mcal$, we will equip $A \leftmod$ with the lifted model structure,
where fibrations and weak equivalences are defined by
forgetting to $\mcal$.
\begin{itemize}
\item $DE \fcal_+ \leftmod$, the category of $DE \fcal_+$--modules
in $\ccal \osp$.
This model category captures the information coming from the finite cyclic groups.
\item $L_{\widetilde{E} \fcal} \ccal \osp$,
the model category $\ccal \osp$, localised
at the homology theory $\widetilde{E} \fcal$.
This model category captures the information coming from  $\torus$.
\item $L_{\widetilde{E} \fcal \smashprod DE \fcal_+} DE \fcal_+ \leftmod$
the category of $DE \fcal_+$--modules
in $\ccal \osp$, localised at the
homology theory $\widetilde{E} \fcal \smashprod DE \fcal_+$.
This model category captures the interaction of the first two.
\end{itemize}

Now we can give our diagram of model categories that
separates the behaviour of the finite cyclic groups from the rest.

\begin{definition}
We define $S^\bullet$ to be the following diagram of model categories.
\[
\xymatrix@C+1cm{
DE \fcal_+ \leftmod
\ar@<+1ex>[r]^-{\id}
&
L_{\widetilde{E} \fcal \smashprod DE \fcal_+} DE \fcal_+ \leftmod
\ar@<+0.5ex>[l]^-{\id}
\ar@<-0.5ex>[r]_-{U}
&
L_{\widetilde{E}\fcal } \ccal \osp
\ar@<-1ex>[l]_-{DE \fcal_+ \smashprod -}
}
\]
We thus have a cellular
model category $S^\bullet \leftmod$, that is both proper and stable.
\end{definition}

Given any $O(2)$--spectrum $X$, we have an $S^\bullet$--module
\[
S^\bullet \smashprod X
: =
(DE \fcal_+ \smashprod X, \id, DE \fcal_+ \smashprod X, \id , X).
\]
The functor $S^\bullet \smashprod -$ has a right adjoint called $\pullback$,
which is constructed just after \cite[Definition 3.2.3]{BGKS}.
This right adjoint sends an object $(A,\alpha, B, \gamma, C)$
to the pullback of the diagram
\[
\xymatrix@C+0.3cm{
A \ar[r]^{\alpha} & B &
\ar[l]_-\gamma C \smashprod DE \fcal_+ & \ar[l]_-{C \smashprod \textrm{unit}} C
}.
\]
Similarly to \cite[Proposition 3.2.4]{BGKS} we have a Quillen pair as below.
\[
S^\bullet \smashprod - :
\ccal \osp
\adjunct
S^\bullet \leftmod
: \pullback
\]

Now we want to relate this to the $\torus$--equivariant separation.
The diagram $S^\bullet$ is a diagram of model categories of $O(2)$--spectra (with extra structure),
so at each vertex we can make an analogous category built from $\torus$--equivariant spectra.

\begin{definition}\label{def:isbullet}
Let $i^* D E \fcal_+ \leftmod$ denote modules over $i^* DE \fcal_+$ in
$\tsp$. There is a diagram of
model categories $i^* S^\bullet$,
made from the three model categories:
\[
i^* D E \fcal_+ \leftmod,
\quad
L_{i^* \widetilde{E} \fcal \smashprod i^* DE \fcal_+} i^* DE \fcal_+ \leftmod
\quad \textrm{and} \quad
L_{i^* \widetilde{E} \fcal} \tsp
\]
\end{definition}
The above diagram of model categories is precisely the
diagram of \cite[Definition 3.2.3]{BGKS}
and we have a square of Quillen functors as below.
\[
\xymatrix@C+1.5cm@R+0cm{
\ccal O(2) \Sp_\qq
\ar@<0.1cm>[r]^{S^\bullet \smashprod -}
\ar@<0cm>[d]^{i^*} &
S^\bullet \leftmod
\ar@<0cm>[d]^{i^*}
\ar@<0.1cm>[l]^{\pullback} \\
\torus \Sp_\qq
\ar@<0.1cm>[r]^{i^* S^\bullet \smashprod -} &
i^* S^\bullet \leftmod
\ar@<0.1cm>[l]^{\pullback}
}
\]

\begin{lemma}\label{lem:diagramcommute}
The forgetful functors $i^*$ commute with both
the horizontal left adjoints and the horizontal right adjoints.
They also preserve fibrations and cofibrations.
A map $(x,y,z)$ in $S^\bullet \leftmod$ is a weak equivalence
if and only if $i^*(x,y,z) = (i^*x, i^*y, i^* z)$ is a
weak equivalence in $i^* S^\bullet \leftmod$.
\end{lemma}
\begin{proof}
That $i^*$ commutes with the horizontal functors is immediate from the
definitions. The left hand $i^*$ preserves fibrations and cofibrations
by Proposition \ref{prop:commforgetful}.

The right hand $i^*$ preserves and detects weak equivalences in
$S^\bullet \leftmod$ as weak equivalences are detected objectwise and
$i^* \co \ccal \osp \to \tsp$ preserves and detects weak equivalences.
Cofibrations are also defined objectwise, so $i^*$ preserves cofibrations
as it does so on each component model category of $S^\bullet$.

On each component model category of the right hand side, $i^*$ preserves fibrations.
Since the fibrations in $S^\bullet \leftmod$ are defined
in terms of certain pullbacks (which are preserved by $i^*$)
it follows that the right hand $i^*$ also preserves fibrations.
\end{proof}

To turn the horizontal adjunctions of this square into Quillen equivalences,
we apply the Cellularization Principle of \cite[Proposition 2.7]{gscell}.
This result gives conditions under which a Quillen adjunction becomes a
Quillen equivalence after cellularising (right Bousfield localising)
both sides of the adjunction.

Let $e_{C_n}$ denote the idempotent in the rationalised Burnside ring for $C_n$
corresponding to $C_n$ and let $\sphspec_{C_n}$ denote the $C_n$--equivariant sphere spectrum.
The generators of $\ccal \osp$ are
the spectra $O(2)_+ \smashprod_{C_n} e_{C_n} \sphspec_{C_n}$ for $n \geqslant 1$
and the cofibrant replacement of the $O(2)$--equivariant sphere in $\ccal \osp$:
$E \ccal_+ \smashprod \sphspec$.
Let $K_{\Top}$ be the set of images
of these objects under the functor $S^\bullet \smashprod -$, and all
(integer) suspensions and desuspensions thereof.
The elements of this set will be called \textbf{cells} and we will cellularise
(right Bousfield localise) $S^\bullet \leftmod$ at this set.

To apply the cellularisation principle, we need to know that
the cells $K_{\Top}$ are homotopically compact
(also known as small or compact) in the sense of
\cite[Definition 2.1.2]{ss03stabmodcat}.
The arguments of \cite[Section 3.2]{BGKS}
apply verbatim, so we leave the details to that reference.

\begin{proposition}\label{prop:Smodulesequivalence}
There is a Quillen equivalence
\[
S^\bullet \smashprod - :
\ccal \osp
\adjunct
K_{\Top} \cell S^\bullet \leftmod
: \pullback
\]
\end{proposition}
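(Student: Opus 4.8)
The plan is to apply the cellularization principle of \cite[Appendix~B]{greshi}, whose specialisation to the circle group has been running through this section. From the previous proposition we already have the strong symmetric monoidal Quillen adjunction
\[
S^\bullet \smashprod_{S_\cscr} - : S_\cscr \leftmod \adjunct S^\bullet \leftmod_{pi} : \pullback ,
\]
and $S^\bullet \leftmod_{pi}$ is proper and M--cellular, so the cellularisation $K_{\Top} \cell S^\bullet \leftmod_{pi}$ exists and the adjunction descends to a Quillen adjunction onto it (the left adjoint sends the generators to the cells by the very definition of $K_{\Top}$). Write $\mathcal{G}$ for the set of generators $S_\cscr \smashprod F_V O(2)/H_+$ with $H \leqslant \torus$. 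The chosen model structure on $S_\cscr \leftmod$, that of $S_\cscr$--modules in $\cscr O(2) \ical \scal$, is cofibrantly generated by maps assembled from precisely the objects of $\mathcal{G}$, so $\mathcal{G}$ generates $\ho S_\cscr \leftmod$; hence $\mathcal{G} \cell S_\cscr \leftmod = S_\cscr \leftmod$, a $\mathcal{G}$--cellular equivalence in $S_\cscr \leftmod$ is just a weak equivalence, and the cellularization principle reduces the claim to showing that, for each $\gamma \in \mathcal{G}$, the derived unit
\[
\gamma \longrightarrow \mathbb{R} \pullback \, \mathbb{L}(S^\bullet \smashprod_{S_\cscr} -)(\gamma)
\]
is a weak equivalence in $S_\cscr \leftmod$.

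To check this, note that each $\gamma \in \mathcal{G}$ is cofibrant, so $\mathbb{L}(S^\bullet \smashprod_{S_\cscr} -)(\gamma) = S^\bullet \smashprod_{S_\cscr} \gamma = (DE \fcal_+ \smashprod_{S_\cscr} \gamma, \id, S_\fcal \smashprod_{S_\cscr} \gamma, \id, \gamma)$. To apply $\mathbb{R} \pullback$ we must first replace this by a fibrant $S^\bullet$--module; as fibrations of $S^\bullet \leftmod_{pi}$ are objectwise, this means fibrantly replacing each entry, and the key point is that the third entry lives in $L_{\widetilde{E} \fcal \smashprod S_\cscr} S_\cscr \leftmod$, so that its fibrant replacement is the $\widetilde{E} \fcal$--localisation $\widetilde{E} \fcal \smashprod \gamma$ of $\gamma$. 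Feeding such a fibrant model into the pullback construction and using $S_\fcal \simeq \widetilde{E} \fcal \smashprod DE \fcal_+$ (together with the fact that $S_\fcal$--modules are $\widetilde{E} \fcal$--local) exhibits $\mathbb{R} \pullback(S^\bullet \smashprod_{S_\cscr} \gamma)$ as the homotopy pullback, in $S_\cscr \leftmod$, of the cospan
\[
DE \fcal_+ \smashprod_{S_\cscr} \gamma \overset{\eta}{\longrightarrow} S_\fcal \smashprod_{S_\cscr} \gamma \longleftarrow \widetilde{E} \fcal \smashprod \gamma .
\]
Since $S_\fcal \smashprod_{S_\cscr} \gamma \simeq \widetilde{E} \fcal \smashprod DE \fcal_+ \smashprod_{S_\cscr} \gamma$, this is precisely the cospan appearing in the Hasse square established after definition~\ref{def:universalspaces}, smashed with the cofibrant object $\gamma$; and smashing with $\gamma$ preserves homotopy pullback squares, $S_\cscr \leftmod$ being stable and monoidal. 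The Hasse square being a homotopy pullback therefore identifies this homotopy pullback with $\gamma$, and the comparison map out of $\gamma$ — which is exactly the derived unit — with a weak equivalence. This completes the verification, and hence the proof.

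The step demanding the most care is the identification in the second paragraph: one must verify that $\pullback$ applied to an objectwise fibrant $S^\bullet$--module genuinely models the homotopy pullback of the underlying $\pscr$--diagram (the point of the pullback construction for diagrams of model categories, cf.\ \cite[Appendix~B]{greshi}), and one must track carefully how the localisation at the third vertex turns the naive cospan $DE \fcal_+ \smashprod_{S_\cscr} \gamma \to S_\fcal \smashprod_{S_\cscr} \gamma \leftarrow \gamma$ into the Hasse cospan. One also invokes the standard facts, proved for the circle group in \cite[\S\S 3--4]{greshi}, that the Hasse square is a homotopy pullback of $S_\cscr$--modules and that $S_\fcal \simeq \widetilde{E} \fcal \smashprod DE \fcal_+$; the only genuinely new content is the presence of $O(2)$ in place of $\torus$. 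Alternatively, one could transport \cite[Proposition~4.2]{greshi} along the commuting square of Quillen functors displayed above, using that $i^*$ creates weak equivalences on each category involved and commutes with $S^\bullet \smashprod_{S_\cscr} -$ and $\pullback$.
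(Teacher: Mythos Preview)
Your proposal is correct and follows essentially the same route as the paper: both reduce to checking that the derived unit on a generator $\gamma = S_\cscr \smashprod F_V O(2)/H_+$ is a weak equivalence, and both identify $\mathbb{R}\pullback(S^\bullet \smashprod_{S_\cscr} \gamma)$ with the homotopy pullback of the Hasse square smashed with $\gamma$. The only cosmetic difference is that you package the reduction step into an appeal to the cellularisation principle of \cite[Appendix~A]{greshi}, whereas the paper spells out the underlying localising--subcategory argument (exactness and coproduct--preservation of the derived functors) and checks the derived counit separately on the cells of $K_{\Top}$; since those cells are by definition the images of the generators, the counit check is the same computation again.
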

\begin{proof}
This result follows from the same proof as for the $\torus$--case.
By \cite[Proposition 2.7]{gscell} it suffices to show that the
derived unit is a weak equivalence on the generators of $\osp$.
The pullback of a fibrant replacement of $S^\bullet \smashprod X$
is given by the homotopy pullback of the following diagram of O(2)--spectra:
\[
DE \fcal_+ \smashprod X \smashprod S^0 \qq
\longrightarrow
DE \fcal_+ \smashprod \widetilde{E} \fcal \smashprod X \smashprod S^0 \qq
\longleftarrow
\widetilde{E} \fcal \smashprod X \smashprod S^0 \qq.
\]
Since we are in a stable model category, the above is
weakly equivalent to $X \smashprod S^0 \qq$ smashed with the homotopy pullback
of
$DE \fcal_+
\longrightarrow
DE \fcal_+ \smashprod \widetilde{E} \fcal
\longleftarrow
\widetilde{E} \fcal$
which is $\sphspec$. The derived
unit map is induced by the unit map
$X \to DE \fcal_+ \smashprod X$ and
$S^0 \to \widetilde{E} \fcal$
and hence is a rational equivalence.
\end{proof}

We now wish to compare with the $\tsp$ version.
As above, the generators of $\tsp$
are the $\torus$--equivariant sphere spectrum $\sphspec$ and
the spectra $\torus_+ \smashprod_{C_n} e_{C_n} \sphspec_{C_n}$ for $n \geqslant 1$.
Let $i^* K_{\Top}$ be the set of images
of these objects under the functor $i^* S^\bullet \smashprod -$, and all
(integer) suspensions and desuspensions thereof.

\begin{lemma}
The functors $i^*$ below are right Quillen functors that
commute with both the horizontal left adjoints and the horizontal right adjoints.
Furthermore the functors $i^*$ preserve and detect all weak equivalences.
\[
\xymatrix@C+2cm@R+0cm{
\ccal O(2) \Sp_\qq
\ar@<0.1cm>[r]^(0.45){S^\bullet \smashprod -}
\ar@<0cm>[d]^{i^*} &
K_{\Top} \cell S^\bullet \leftmod
\ar@<0cm>[d]^{i^*}
\ar@<0.1cm>[l]^(0.55){\pullback} \\
\torus \Sp_\qq
\ar@<0.1cm>[r]^(0.45){i^* S^\bullet \smashprod -} &
i^* K_{\Top} \cell i^* S^\bullet \leftmod
\ar@<0.1cm>[l]^(0.55){\pullback}
}
\]
\end{lemma}
\begin{proof}
The commutativity follows immediately from the definitions.
Let $\rr \pullback $ denote the right derived functor
of $\pullback$.
Then $f \co X \to Y$ in $K_{\Top} \cell S^\bullet \leftmod$
is a weak equivalence if and only if
$\rr \pullback f$ is a weak equivalence of $\ccal \osp$.
This holds if and only if $i^* \rr \pullback f = \rr \pullback i^* f$
is a weak equivalence in $\tsp$ by Proposition \ref{prop:commforgetful}.
Finally, that holds if and only if
$i^* f$ is a weak equivalence in
$i^* K_{\Top} \cell i^* S^\bullet \leftmod$, as the lower adjunction is a
Quillen equivalence.
Thus the right hand $i^*$ is a right Quillen functor after cellularisation
as the fibrations are unchanged and it preserves weak equivalences.
\end{proof}

Thus we have separated the homotopical information of $S_\ccal \leftmod$ into a diagram of three
model categories. The advantage of doing so is that we may now remove the equivariance
from the model category whilst keeping the correct homotopy category.

\subsection{Removing equivariance}\label{subsec:fixedpoints}

We start with the general pattern used in this section.
Let $A$ be a (commutative) ring spectrum in $O(2)$--equivariant spectra.
Then $A^{\torus}$ is a (commutative) ring object in $\Sp[W]$ and there is a map
of commutative rings $a \co \varepsilon^* A^{\torus} \to A$.
Using \cite[Section 4]{gsfixed}
we get a Quillen adjunction
\[
a_\sharp = A \smashprod_{\varepsilon^* A^{\torus}} \varepsilon^* (-):
A^{\torus} \leftmod
\adjunct
A \leftmod
: (-)^{\torus}.
\]
That reference has a number of examples where this
kind of adjunction is a Quillen equivalence.
We want to use this type of adjunction to remove $\torus$--equivariance from
$S^\bullet \leftmod$. We do so by considering each of the three component
model categories in turn. In the first we consider $A=DE \fcal_+$.

\begin{lemma}\label{lem:torusfixed}
The adjunction below is a Quillen equivalence.
\[
a_\sharp :
DE\fcal_+^{\torus} \leftmod
\adjunct
DE\fcal_+ \leftmod
: (-)^{\torus}
\]
\end{lemma}
\begin{proof}
The forgetful functors, to $\wsp$ on the left and
$\tsp$ on the right, commute with both
$a_\sharp$ and $(-)^{\torus}$, just as in Proposition \ref{prop:commforgetful}.
Furthermore these forgetful functors preserve fibrant objects and cofibrant objects
and preserve and detect weak equivalences.
We also know that the analogous adjunction at the level of $\torus$--equivariant
spectra is a Quillen equivalence by \cite[Proposition 3.3.1]{BGKS}.

Let $X \in DE\fcal_+^{\torus} \leftmod$ be cofibrant and
$Y \in DE\fcal_+ \leftmod$ be fibrant.
The map $f \co a_\sharp X \to Y$ is a weak equivalence
if and only if the adjoint map
$\widehat{f} \co X \to Y^\torus$ is a weak equivalence,
as this holds for $i^*f$ and $i^* (\widehat{f}) = \widehat{i^* f}$.
\end{proof}

The next step is to repeat the above with an additional Bousfield localisation.
The model category $L_{\widetilde{E} \fcal \smashprod DE \fcal_+} DE \fcal_+ \leftmod$
can be described as the localisation of the model category $DE \fcal_+ \leftmod$
at the set of maps $\Sigma^* f$:
the set of all suspensions and desuspensions of
$f : DE\fcal_+ \to DE\fcal_+ \smashprod \tilde{E}\fcal$.
Let $(\Sigma^* f)^\torus$: be the set of maps obtained by applying the derived right adjoint to the maps
$\Sigma^* f$. By \cite[Theorem 3.3.20, part 1b]{hir03} we obtain a Quillen equivalence.

\begin{lemma}
The adjunction below is a Quillen equivalence.
\[
a_\sharp :
L_{(\Sigma^* f)^\torus}
DE\fcal_+^\torus \leftmod
\adjunct
L_{\Sigma^* f} DE \fcal_+ \leftmod
: (-)^{\torus}.
\]
\end{lemma}

The final version is to use $A=S^0$, in which case
the left adjoint $a_\sharp$ is simply $\varepsilon^*(-)$.

\begin{lemma}
The adjunction below is a symmetric monoidal Quillen equivalence.
\[
\varepsilon_* :
\wsp
\adjunct
L_{\widetilde{E} \fcal} \ccal \osp
: (-)^{\torus}
\]
\end{lemma}
\begin{proof}
This follows by the same arguments as for
\cite[Proposition 3.3.3]{BGKS}, namely that
the derived right adjoint behaves
as the geometric $\torus$--fixed point functor,
the left hand side is generated by $W_+$
and the right hand side is generated by $O(2)/\torus_+$.
\end{proof}

We can extend the functor $(-)^{\torus}$ to the level of
diagrams of model categories.

\begin{definition}
We define $S^\bullet_{\Top}$ to be the diagram of model categories below.
Here, $DE\fcal_+^\torus \leftmod$
means modules over $DE\fcal_+^\torus$
in the model category $\wsp$.
\[
\xymatrix@C+1cm{
DE\fcal_+^\torus \leftmod
\ar@<+1ex>[r]^-{\id }
&
L_{(\Sigma^* f)^\torus}
DE\fcal_+^\torus \leftmod
\ar@<+0.5ex>[l]^-{\id}
\ar@<-0.5ex>[r]_-{}
&
\wsp
\ar@<-1ex>[l]_-{DE \fcal_+ \smashprod - }
}
\]
The unmarked functor is simply the forgetful functor.
\end{definition}

Because of the way we have constructed $S^\bullet_{\Top}$
it follows that $(a_\sharp, (-)^\torus)$
gives a map of diagrams of model categories from
$S^\bullet_{\Top}$ to $S^\bullet$. Since each of the components is
a Quillen equivalence, we immediately obtain the following.

\begin{theorem}\label{thm:removeequivariance}
There is a Quillen equivalence.
\[\xymatrix@R+0.5cm@C+2cm{
S^\bullet_{\Top} \leftmod
\ar@<0.1cm>[r]^{a_\sharp}
&
S^\bullet \leftmod
\ar@<0.1cm>[l]^{(-)^{\torus}}
}\]
\end{theorem}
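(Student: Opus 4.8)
The plan is to deduce the theorem directly from the two general facts about $\pscr$--diagrams established earlier in this section: first, that a map of $\pscr$--diagrams of M--cellular proper model categories whose three vertex components are Quillen equivalences induces a Quillen equivalence on the associated categories of modules with the objectwise model structure; and second, the refinement that if in addition each vertex component is a symmetric monoidal Quillen equivalence, then so is the induced adjunction on module categories. Thus the substantive part of the argument is to check that $(\inf, (-)^{\torus})$ really is a map of $\pscr$--diagrams from $R^\bullet$ to $S^\bullet$ meeting these hypotheses.

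First I would record that the three vertex components are exactly the symmetric monoidal Quillen equivalences proved in the three lemmas immediately preceding this theorem: at the left vertex $\inf\co D\classify\fcal_+\leftmod \adjunct DE\fcal_+\leftmod$, at the middle vertex $\inf\co \Phi^{\torus}(DE\fcal_+)\leftmod \adjunct S_\fcal\leftmod$, and at the right vertex $\inf\co \cscr W\ical\scal \adjunct L_{E\widetilde{\fcal}\smashprod S_\cscr}S_\cscr\leftmod$, each with right adjoint $(-)^{\torus}$. Every category in $R^\bullet$ and $S^\bullet$ is M--cellular, proper, stable and monoidal, so the standing hypotheses on the vertices hold, and both diagrams are of the kind needed to form the objectwise monoidal model structures $R^\bullet\leftmod_{pi}$ and $S^\bullet\leftmod_{pi}$.

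Next I would verify the two commutativity conditions on the left adjoints that a map of $\pscr$--diagrams requires, namely natural isomorphisms
\[
\inf\bigl(\Phi^{\torus}(DE\fcal_+)\smashprod_{D\classify\fcal_+} - \bigr)\cong \bigl(S_\fcal\smashprod_{DE\fcal_+} -\bigr)\inf ,
\qquad
\inf\bigl(\Phi^{\torus}(DE\fcal_+)\smashprod - \bigr)\cong \bigl(S_\fcal\smashprod_{S_\cscr} -\bigr)\inf .
\]
Both are instances of the principle that a strong symmetric monoidal left adjoint commutes with extension of scalars along a ring map and with formation of free modules: $\inf$ is built from $\varepsilon^*$ and an extension of scalars, hence is strong symmetric monoidal, and the identifications $D\classify\fcal_+=(DE\fcal_+)^{\torus}$ and $\Phi^{\torus}(DE\fcal_+)=(S_\fcal)^{\torus}$, together with the definition of $\nu$, were arranged precisely so that $\inf$ carries the structure functors of $R^\bullet$ to those of $S^\bullet$ on the nose. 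The one extra point at the right vertex is that inflation out of $\cscr W\ical\scal$ takes values in the localisation $L_{E\widetilde{\fcal}\smashprod S_\cscr}S_\cscr\leftmod$, which holds because everything in its image is already $E\widetilde{\fcal}$--local.

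With the two squares of left adjoints shown to commute up to natural isomorphism, the first general lemma gives a Quillen equivalence between $R^\bullet\leftmod_{pi}$ and $S^\bullet\leftmod_{pi}$, and the monoidal refinement upgrades it to a symmetric monoidal Quillen equivalence, the unit condition being inherited objectwise from the three vertex adjunctions. I expect the main obstacle to lie not in any of these formal deductions but in the bookkeeping of the preceding step: one has to make sure that the identifications of $D\classify\fcal_+$ and $\Phi^{\torus}(DE\fcal_+)$ and the map $\nu$ are genuinely compatible, via the coherence isomorphisms supplied by strong monoidality of $\inf$, with the base-change functors defining $S^\bullet$, so that the relevant squares commute strictly enough to apply the general machinery rather than merely up to weak equivalence.
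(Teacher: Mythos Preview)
Your proposal is correct and follows exactly the paper's approach: the paper states just before the theorem that $R^\bullet$ was constructed precisely so that $(\inf,(-)^{\torus})$ is a map of $\pscr$--diagrams, and then invokes the general lemma on maps of diagrams together with its monoidal refinement, the three vertex equivalences having been established in the three preceding lemmas. Your write-up simply unpacks the commutativity verification that the paper leaves implicit in the phrase ``because of the way we have constructed $R^\bullet$''.
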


We define $K_{\Top}^{\torus}$, the set of cells for $S^\bullet_{\Top} \leftmod$,
to be the set of objects given by
applying the derived functor of $(-)^{\torus}$ to $K_{\Top}$.
By \cite[Corollary 2.8]{gscell}
we see that the Quillen equivalence
above is preserved by cellularisation.
\begin{corollary}\label{cor:removeequivariancecell}
The adjunction below is a Quillen equivalence.
\[\xymatrix@R+0.5cm@C+2cm{
K_{\Top}^{\torus} \cell S^\bullet_{\Top} \leftmod
\ar@<0.1cm>[r]^{a_\sharp}
&
K_{\Top} \cell S^\bullet \leftmod
\ar@<0.1cm>[l]^{(-)^{\torus}}
}\]
\end{corollary}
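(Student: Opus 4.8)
The plan is to obtain this immediately from the symmetric monoidal Quillen equivalence $(\inf, (-)^{\torus})$ between $R^\bullet \leftmod_{pi}$ and $S^\bullet \leftmod_{pi}$ of the preceding theorem, by an application of the cellularisation principle in the form recorded as \cite[Corollary B.7]{greshi}. That corollary says that a Quillen equivalence between right proper M--cellular model categories passes to a Quillen equivalence between their cellularisations, once the two sets of cells are chosen to correspond under the adjunction: one cellularises one side at a set $K$ of cofibrant objects and the other side at the derived image of $K$ under the appropriate adjoint. So the only work is to check that the hypotheses of that corollary apply to our adjunction and to the cell sets $K_{\Top}^{\torus}$ and $K_{\Top}$.

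First I would recall that both $R^\bullet \leftmod_{pi}$ and $S^\bullet \leftmod_{pi}$ are stable, right proper, M--cellular and monoidal; this was part of their construction, and it ensures that the right Bousfield localisations $K_{\Top}^{\torus} \cell R^\bullet \leftmod_{pi}$ and $K_{\Top} \cell S^\bullet \leftmod_{pi}$ exist. Next I would observe that $K_{\Top}$ is a set of \emph{cofibrant} objects of $S^\bullet \leftmod_{pi}$, being the images under the left Quillen functor $S^\bullet \smashprod_{S_\cscr} -$ of the generators $S_\cscr \smashprod F_V O(2)/H_+$ of $S_\cscr \leftmod$, which are themselves cofibrant. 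Finally, by definition $K_{\Top}^{\torus}$ is precisely the derived image of $K_{\Top}$ under the right adjoint $(-)^{\torus}$, that is, the set of cofibrant replacements in $R^\bullet \leftmod_{pi}$ of $(-)^{\torus}$ applied to fibrant replacements of the elements of $K_{\Top}$, so the two cell sets correspond exactly as \cite[Corollary B.7]{greshi} requires.

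With this in hand the corollary applies verbatim: because $(\inf, (-)^{\torus})$ is already a Quillen equivalence, its derived unit and counit are weak equivalences on every object, so no further condition on the cells has to be checked, and the induced adjunction between $K_{\Top}^{\torus} \cell R^\bullet \leftmod_{pi}$ and $K_{\Top} \cell S^\bullet \leftmod_{pi}$ is a Quillen equivalence. I do not expect a genuine obstacle here; all of the real content sits in the construction of the diagrams $R^\bullet$, $S^\bullet$ and in the verification that $(\inf, (-)^{\torus})$ is a Quillen equivalence. The only point that deserves a moment's care is the routine bookkeeping that every step leading to these diagram model categories --- passing to module categories, forming the $\pscr$--diagram category, and the earlier localisation $L_{E \widetilde{\fcal} \smashprod S_\cscr}$ --- preserves right properness and M--cellularity, so that the cellularisations are legitimate and \cite[Corollary B.7]{greshi} genuinely applies; since each of these properties was tracked at the relevant stage, this is immediate.
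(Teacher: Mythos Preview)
Your proposal is correct and follows essentially the same approach as the paper: the paper simply remarks that $K_{\Top}^{\torus}$ is defined as the derived image of $K_{\Top}$ under $(-)^{\torus}$ and then invokes \cite[Corollary B.7]{greshi} to conclude that the Quillen equivalence of the preceding theorem is preserved by cellularisation. Your additional verification of the hypotheses (right properness, M--cellularity, cofibrancy of the cells) is more explicit than the paper's treatment but amounts to the same argument.
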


The forgetful functor $i^*$ relates the toral spectra version
and the $\torus$--equivariant version of this result \cite{BGKS}.
Recall that we have the diagram of
model categories based on $\torus$--spectra which we call
$i^* S^\bullet$, see Definition \ref{def:isbullet}.
Just as we constructed $S^\bullet_{\Top}$, we
can also make
a diagram of model categories $i^* S^\bullet_{\Top}$
that is the `fixed points' of $i^* S^\bullet$.
We then have the following result which is similar in nature
to Proposition \ref{prop:commforgetful}.

\begin{corollary}
There is a diagram of Quillen functors as below.
We note that $i^*$ commutes with both $a_\sharp$ and $(-)^{\torus}$
up to natural isomorphism.
\[\xymatrix@R+0cm@C+2cm{
S^\bullet_{\Top} \leftmod
\ar@<0.1cm>[r]^{a_\sharp}
\ar@<0.1cm>[d]^{i^*}
&
S^\bullet \leftmod
\ar@<0.1cm>[l]^{(-)^{\torus}}
\ar@<0.1cm>[d]^{i^*}
\\
i^* S^\bullet_{\Top} \leftmod
\ar@<0.1cm>[r]^{a_\sharp}
&
i^* S^\bullet \leftmod
\ar@<0.1cm>[l]^{(-)^{\torus}}
}\]
\end{corollary}

\subsection{Moving to algebra}\label{subsec:toalgebra}

We want to replace the model category
$K_{\Top}^{\torus} \cell S_{\Top}^\bullet \leftmod$
by a Quillen equivalent $\ch(\qq[W])$--model category.
The method is the same as in \cite[Section 3.4]{BGKS}, we briefly describe the process.
We forget structure to get a diagram of model categories based on
symmetric spectra with $W$--action (rather than orthogonal spectra).
Then we `smash' with $\h \qq$ to get a diagram of model categories based on
$\h \qq$--modules in symmetric spectra with $W$--action.
Then one can apply \cite[Lemma 5.7]{kedziorekexceptional} (an extension of \cite{shiHZ})
to get a diagram of model categories based on $\ch(\qq[W])$.
We leave the fine details to the references
and simply state the consequences.

\begin{theorem}\label{thm:movetoalgebra}
There exists a commutative ring $S_t$ in $\ch(\qq[W])$
and a set of maps of $S_t$--modules $A$ such that
\begin{itemize}[noitemsep]
  \item there is an isomorphism $\h_*(S_t) \cong \pi_*^\torus (DE \fcal_+)$
  \item for any $a \in A$, there is a canonical $g \in (\Sigma^* f)^\torus$ with
        $\h_*(a) \cong \pi_*(g)$.
  \item there is a zig-zag of Quillen equivalences
        between $S^\bullet_{\Top} \leftmod$ and the diagram of model categories
        $S^\bullet_{t} \leftmod$
\[
\xymatrix@C+1cm{
S_t \leftmod
\ar@<+1ex>[r]^-{\id}
&
L_{A} S_t \leftmod
\ar@<+0.5ex>[l]^-{\id}
\ar@<-0.5ex>[r]_-{U}
&
\ch(\qq[W])
\ar@<-1ex>[l]_-{S_t \smashprod -}
}.
\]
\end{itemize}
\end{theorem}

As in \cite[Section 3.4]{BGKS},
the zig-zag between $S^\bullet_{\Top} \leftmod$ and
$S^\bullet_{t} \leftmod$ actually
consists of objectwise Quillen equivalences.
In particular, there is a zig-zag of Quillen equivalences between
$S_t \leftmod$ and $DE\fcal_+^\torus \leftmod$ and $A$ is the
set of images of the maps in $(\Sigma^* f)^\torus$
under the derived zig-zag.

Since cellularisation is compatible with Quillen
equivalences \cite[Corollary 2.8]{gscell}, there is a set of cells $K_t$ in $S_t \leftmod$
which gives the following result. These cells $K_t$ are the images of the objects in $K_{\Top}$
under the derived zig-zag of Quillen equivalences.

\begin{corollary}
There is a zig-zag of Quillen equivalences between
the model category
$K_{\Top}^{\torus} \cell R_{\Top}^\bullet  \leftmod$
and the model category
$K_{t} \cell S_t^\bullet \leftmod$.

For each $\tau \in K_t$ there is a canonical $\sigma \in K_{\Top}$
with $\h_*(\tau) \cong \pi_*(\sigma)$.
\end{corollary}

This process is compatible with the forgetful functor,
in the sense that at each stage of the zig-zag, there is
a commutative square of functors similar to that of
Proposition \ref{prop:commforgetful}.
In particular, there is a zig-zag of Quillen equivalences between the model categories
\[
i^* K_{\Top}^{\torus} \cell i^* S_{\Top}^\bullet \leftmod
\quad \textrm{and} \quad
i^* K_{t} \cell i^* S_t^\bullet \leftmod
\]
and the forgetful functor
\[
i^* \co
K_{t} \cell S_t^\bullet \leftmod
\longrightarrow
i^* K_{t} \cell i^* S_t^\bullet \leftmod
\]
preserves and detects weak equivalences and fibrations.

\subsection{Simplifying the algebra}\label{subsec:ramod}

We can simplify the diagram $S_t$ in two ways.
First by removing the localisation at $A$, secondly by
replacing the commutative dga $S_t$ by a simpler commutative dga.
The key to both is formality arguments, we use the fact that the homology
of $S_t$ or the homology of the maps in $A$ is sufficiently well-structured to determine their
homology type.
The method is an extension of \cite[Section 4]{BGKS},
where we include
the action of $W=O(2)/\torus$ on $\ocal_\fcal$.
This extension is possible since in the
previous sections we have shown that at each our stage forgetful functors
restrict to the categories and objects used in \cite{BGKS}.
Let $\ocal_\fcal$ be the graded ring $\prod_{n \geqslant 0} \qq[c_n]$
from Section \ref{subsec:the model}. Recall that
each $c_n$ has degree $-2$.

\begin{lemma}\label{lem:homotopycalc}
There is a zig-zag of quasi-isomorphisms of commutative ring objects in
$\ch(\qq[W])$ between $S_t$ and $\ocal_\fcal$.
\end{lemma}
\begin{proof}
We already have isomorphisms of commutative ring objects in
$\ch(\qq[W])$
\[
\h_*(S_t) \cong \pi_* (DE\fcal_+^\torus)
\cong
\pi_*^{\torus} (DE\fcal_+)
=
[S^0 , DE\fcal_+]_*^{\torus}
\cong
[E\fcal_+, S^0]_*^{\torus} .
\]
By \cite[Theorems 7.4 and 7.5]{gretnq1}, in particular the first line of the
proof of Theorem 7.5, it follows that
\[
[E\fcal_+, S^0]_*^{\torus}
\cong
[E\fcal_+, E\fcal_+]_*^{\torus}
\cong
\prod_{H \in \fcal} \h^* (\classify (\torus/H)) =
\prod_{H \in \fcal} \qq[c]
= \ocal_\fcal.
\]
The $O(2)$--action on $E\fcal_+$ induces a $W$--action on
$\pi_*^{\torus} (DE\fcal_+)$ and hence there is a $W$--action on
$\h^* (\cc P^\infty) = \h^* (\classify \torus/H) = \qq[c]$.
This action sends $c^i$ to $(-1)^i c^i$ as it is induced by the
self-map of $\torus$ given by $t \mapsto t^{-1}$, which is exactly
conjugation by a reflection of $O(2)$.
This fact is also noted immediately above Lemma 7.1 of \cite{greenso3}.
Thus we know the homology of $S_t$ as a graded ring with $W$--action.
Our next task is to show that $S_t$ is quasi-isomorphic
to its homology.

There is a cycle $x_n \in S_t$
which corresponds to the idempotent
$e_n \in \ocal_\fcal$, which projects onto
factor $n$. Since this may not be $W$--fixed,
let $y_n$ be the average of $x_n$ and $w x_n$.
We then have a quasi-isomorphism
$S_t \to \prod_{n \geqslant 1} S_t[y_n^{-1}]$.
For each $n$, pick a representative
$a_n \in S_t[y_n^{-1}]$ for the homology class $c_n$.
Now let $b_n = 1/2 (a_n - wa_n)$.
Then the map sending $c_n$ to $b_n$ gives a
$W$-equivariant quasi-isomorphism
$\qq[c_n] \to S_t[y_n^{-1}]$.
Putting these together for each $n$ gives the
other half of our zig-zag.
\end{proof}

The other simplification is formally the same as
the argument in \cite[Section 4]{BGKS}, so we leave the details to the
reference.

\begin{proposition}\label{prop:simplemodel}
There is a Quillen equivalence between
$S_t^\bullet \leftmod$,
and a diagram of model categories
$S_a^\bullet \leftmod$:
\[
\xymatrix@C+1cm{
\ocal_\fcal \leftmod
\ar@<+1ex>[r]^-{\ecal^{-1} \ocal_\fcal \otimes_{\ocal_\fcal} -}
&
\ecal^{-1} \ocal_\fcal \leftmod
\ar@<+0.5ex>[l]
\ar@<-0.5ex>[r]
&
\ch(\qq[W])
\ar@<-1ex>[l]_-{\ecal^{-1} \ocal_\fcal \otimes -}
}
\]
where both unmarked functors are forgetful functors.
Let $K_a$ be the images of the cells $K_t$ in
$S_a^\bullet \leftmod$.
Then we have Quillen equivalences between
$K_{t} \cell S_t^\bullet \leftmod$
and
$K_{a} \cell S_a^\bullet \leftmod$.
\end{proposition}

As in previous cases, application of the forgetful functor
from $\ch(\qq[W]) \to \ch(\qq)$
reduces us to the case of rational $\torus$--spectra.
The forgetful functor
\[
i^* \co
K_{a} \cell S_a^\bullet \leftmod
\longrightarrow
i^* S_{a} \cell i^* S_a^\bullet \leftmod
\]
preserves and detects weak equivalences and fibrations.

\subsection{Comparison with the algebraic model}\label{subsec:torsion}

We now turn to comparing $S_a^\bullet \leftmod$
to the algebraic model $d \acal(\ccal)$ of Section \ref{sec:toralmodel}.
We first introduce an adjoint pair relating
$S_a^\bullet \leftmod$ and
$d \acal(\ccal)$.
An object
\[
\beta \co M \to \ecal^{-1} \ocal_\fcal \otimes V
\]
of $d \acal(\ccal)$ gives
an object of $S_a^\bullet \leftmod$
defined by
\[
(M, \ecal^{-1}\beta, \ecal^{-1} \ocal_\fcal \otimes V, \id , V)
\]
This functor, which we call $l^*$, includes $d \acal(\ccal)$ into
$S_a^\bullet \leftmod$, it has a right adjoint $\Gamma$.
For more details, see \cite[Section 7]{barnesmonoidal}.

\begin{theorem}\label{thm:gammaequiv}
The pair $(l^*,\Gamma)$ induces a Quillen equivalence between
the model categories
$d \acal(\ccal)$ and
$K_a \cell S_a^\bullet \leftmod$.
Furthermore, the square of left adjoints (and the square of right adjoints)
of the diagram below commute.
\[
\xymatrix@C+1cm{
d \acal(\ccal)
\ar@<0.1cm>[r]^(0.4){l^*} \ar@<0.1cm>[d]^{i^*}
&
K_a \cell S_a^\bullet \leftmod
\ar@<0.1cm>[l]^(0.6){\Gamma} \ar@<0.1cm>[d]^{i^*}
\\
d \acal(\torus)
\ar@<0.1cm>[r]^(0.4){l^*} \ar@<0.1cm>[u]^{\mathbb{D}}
&
i^* K_a \cell i^* S_a^\bullet \leftmod
\ar@<0.1cm>[l]^(0.6){\Gamma} \ar@<0.1cm>[u]^{\mathbb{D}}
}
\]
\end{theorem}
\begin{proof}
Recall $\mathbb{D}$, the left adjoint to the forgetful functors
$i^*$, see Lemma \ref{lem:leftadjoint}. That the left adjoints commute is
immediate from the definitions. It follows automatically that the
square of right adjoints also commutes.

The lower adjunction is a Quillen equivalence by \cite[Proposition 4.2.4]{BGKS}.
The weak equivalences and fibrations for
$K_a^{dual} \cell S_a^\bullet \leftmod$ and
$d \acal(\ccal)$ are defined
in terms of the functors $i^*$.
It is also routine to check that $i^*$ preserves
cofibrant objects.
Hence the adjunction
between $d \acal(\ccal)$ and
$ K_a^{dual} \cell  R_a^\bullet \leftmod$
is a Quillen equivalence by the
same argument as in Lemma \ref{lem:torusfixed}.
\end{proof}

We summarise this section with the following result.

\begin{corollary}\label{cor:toralconclusion}
There is a zig-zag of Quillen equivalences between
the model category $d \acal(\ccal)$ and the model category of toral $O(2)$--spectra,
$\ccal \osp$. Furthermore, these Quillen equivalences are compatible with the two forgetful functors
\[
i^* \co \ccal \osp \to \tsp \quad \textrm{and} \quad
i^* \co d \acal(\ccal) \to d \acal(\torus).
\]
Hence the algebraic forgetful functor correctly models the spectrum--level
forgetful functor.
\end{corollary}

\begin{rmk}\label{rmk:monoidal}
As with \cite{BGKS}, all of the Quillen equivalences of
this section are in fact symmetric monoidal, giving us a
classification of ring objects in $\ccal \osp$
in $d \acal(\ccal)$. However the classification of the
dihedral part of $\osp$ is not monoidal, so we do not
obtain a monoidal classification of $\osp$ overall.
\end{rmk}

\section{Dihedral spectra}\label{sec:dihedral}

In this section we find a model category based on chain complexes of $\qq$--modules
that is Quillen equivalent to the model category of dihedral spectra
$\dihedsp$. The material we present is an updated version of the preprint \cite{barnesdihedral}.
An early draft of this paper purported to give a symmetric monoidal
Quillen equivalence. That result relied on there being a
commutative ring $G$--spectrum weakly equivalent
to $e_\dcal S$. A correct interpretation
of such a result would require careful use of $N_\infty$--spectra
such as in \cite{BlumbergHillNorms} (and possible further work in that area).
Hence we leave monoidal considerations to future work.

\subsection{The dihedral model}\label{subsec:dihedralmodel}

The paper \cite{greo2} constructs an algebraic model for the
homotopy category of dihedral spectra.
We call this category $\acal(\dcal)$
and write $dg\acal(\dcal)$ for the category of differential graded objects
in $\acal(\dcal)$.
In this subsection we recap the definition of
that category and equip it with a model structure.
The category $\acal(\dcal)$ can also be described as the category of rational $O(2)$--Mackey functors
with support in the dihedral groups, see \cite[Examples C(iii)]{greratmack}.

Recall that $W$ is used to denote the group of order two.
For $R$ a ring, let $\ch(R)$ denote the category of chain complexes of $R$--modules.
We use $\qq[W]$ to denote the rational group ring of $W$.
We will often consider $\qq$--modules as
objects of $\qq[W] \leftmod$ with trivial $W$--action
without comment or decoration.

\begin{definition}
We define a category called $\acal(\dcal)$.
An object $M$
consists of the following data: a $\qq$--module $M_{\infty}$,
a collection of $\qq[W]$--modules $M_k$ for $k \geqslant 1$
and a map of $\qq [W]$--modules
$\sigma_M \co M_\infty \to \colim_n \prod_{k \geqslant n} M_k$.

A map $f \co M \to M'$ in this category consists of
a map $f_\infty \co M_{\infty} \to M_{\infty}'$
in $\qq \leftmod$ and maps
$f_k \co M_k \to M_k'$
in $\qq[W] \leftmod$
making the square below commute.
\[\xymatrix@C+0.5cm{
M_{\infty} \ar[r]^(0.4){\sigma_M} \ar[d]_{f_\infty} &
\colim_n \prod_{k \geqslant n} M_k
\ar[d]^{\colim_n \prod_{k \geqslant n} f_k} \\
M_{\infty}' \ar[r]^(0.4){\sigma_{M'}} &
\colim_n \prod_{k \geqslant n} M_k'
}\]
We will also write $\tails(M)$ for
$\colim_n \prod_{k \geqslant n} M_k$.
\end{definition}

\begin{definition}\label{def:dihedralmodel}
Let $dg \acal(\dcal)$ be the category of chain complexes
in $\acal(\dcal)$. We call this the \textbf{algebraic
model for dihedral spectra}.
We shall also need to use
$g \acal(\dcal)$,
the category of graded objects in
$\acal(\dcal)$.
\end{definition}

We see that an object $M$ of $dg \acal(\dcal)$
consists of a rational chain complex $M_{\infty}$
and a collection $M_k \in \ch (\qq [W])$ for $k \geqslant 1$
with a map of chain complexes of $\qq [W]$--modules
$\sigma_M \co M_\infty \to \colim_n \prod_{k \geqslant n} M_k$.
A map $f$ in this category consists of a map $f_\infty \in \ch(\qq)$
and maps $f_k \in \ch(\qq [W])$
such that the analogous square to the above definition commutes.

We want to show how to construct an object of $\acal(\dcal)$
from a rational $O(2)$--spectrum.
We first need to discuss some more idempotents of rationalised
Burnside rings.

Recall the idempotents $e_\ccal$, $e_\dcal$ and $e_n$ for $n \geqslant 1$
from Definition \ref{def:o2idem}.
We also have $f_n = e_\dscr - \Sigma_{k=1}^{n-1} e_k$.

\begin{lemma}\label{lem:subgroupidem}
Let $D^h_{2n}$ be a dihedral subgroup of $O(2)$ of order $2n$.
Then for each $k \mid n$ the rational Burnside ring of $D_{2n}^h$ has
idempotents $e_{C_k}$ and $e_{D_{2k}}$. The collection of idempotents
$e_{C_k}$ and $e_{D_{2k}}$ for $ k \mid n$ gives a maximal
orthogonal decomposition of the identity .

The inclusion map $D_{2n}^h \to O(2)$ induces the following map
$A(O(2)) \to A(D_{2n}^h)$
$$
\begin{array}{rcll}
e_\ccal  &  \mapsto  & \Sigma_{k \mid n} e_{C_k}     \\
e_\dcal  &  \mapsto  & \Sigma_{k \mid n} e_{D_{2k}}  \\
e_k      &  \mapsto  & e_{D_{2k}}   & k \mid n       \\
e_k      &  \mapsto  & 0            & k \nmid n
\end{array}
$$
\end{lemma}
\begin{proof}
Consider  the induced map
$\fcal D_{2n}^h / D_{2n}^h \to \fcal O(2)/O(2)$ (which sends the cyclic groups to $SO(2)$)
and use tom Dieck's isomorphism.
\end{proof}

The following definition and theorem
are taken from \cite{greo2}.
Note that for any compact Lie group $G$ and closed subgroup $H$,
the action of $N_G H/H$ on $G/H$
induces an action of $N_G H/H$ on
$[G/H_+,X]^G_* \cong \pi_*^H(X)$.

\begin{definition}\label{def:dihedralmackey}
Let $X$ be an $O(2)$--spectrum with rational homotopy groups.
We let $\underline{\pi}_*^\dcal(X)$ denote the following object of
$g\acal(\dcal)$.
Let $k \geqslant 1$ and define
\[
\underline{\pi}_*^\dcal(X)_k = e_{D_{2k}} \pi_*^{D_{2k}^h}(X) \quad \quad
\underline{\pi}_*^\dcal(X)_\infty = \colim_n ( f_n \pi_*^{O(2)}(X) ).
\]
Note that $\underline{\pi}_*^\dcal(X)_k$ is
isomorphic to the homotopy groups of the $D_{2k}$--geometric fixed points of $X$.
Whenever $k \geqslant n$, there is a map
\[f_n \pi_*^{O(2)}(X)  \longrightarrow
e_{D_{2k}} \pi_*^{D_{2k}^h}(X) \]
induced from the inclusion $D_{2k}^h \to O(2)$
and multiplication by $e_{D_{2k}}$.
Thus we obtain a map
\[
f_n \pi_*^{O(2)}(X)  \longrightarrow
\prod_{k \geqslant n } e_{D_{2k}} \pi_*^{D_{2k}^h}(X)
\]
Taking colimits over $n$ defines the structure map $\sigma$
of $\underline{\pi}_*^\dcal(X)$.

Since any fibrant object of $\dihedsp$ has rational homotopy
groups, this construction defines a functor
\[
\underline{\pi}_*^\dcal \co \ho \left( \dihedsp \right) \longrightarrow g\acal(\dcal).
\]
\end{definition}

Thus one has a map of graded $\qq$--modules
\[
[X,Y]^{\dcal O(2)}_* \to
\hom_{g\acal(\dcal)} ( \underline{\pi}_*^\dcal( X),
\underline{\pi}_*^\dcal(Y) ).
\]
This fits into an Adams short exact sequence as below, see
\cite[Corollary 5.5]{greo2}.

\begin{theorem}\label{thm:dihedraladams}
Let $X$ and $Y$ be $O(2)$--spectra with rational homotopy groups.
Then there is a short exact sequence as below.
\[
0 \to
\ext( \underline{\pi}_*^\dcal( \Sigma X),
\underline{\pi}_*^\dcal(Y) ) \to
[X,Y]^{\dcal O(2)}_* \to
\hom_{g\acal(\dcal)} ( \underline{\pi}_*^\dcal( X),
\underline{\pi}_*^\dcal(Y) ) \to 0
\]
\end{theorem}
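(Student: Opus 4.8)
The plan is to obtain the sequence as the degenerate case of an Adams spectral sequence
\[
E_2^{s,*} = \ext^s_{\mathcal{A}(\dscr)}\big(\underline{\pi}_*^\dscr(X),\underline{\pi}_*^\dscr(Y)\big) \Longrightarrow [X,Y]^{S_\dscr}_*,
\]
built by realising an injective resolution of $\underline{\pi}_*^\dscr(Y)$ in the algebraic category. The whole statement then reduces to two assertions: first, that $\mathcal{A}(\dscr)$, viewed as a graded abelian category, has injective dimension one, so that the spectral sequence is concentrated in the columns $s=0,1$ and collapses with no differentials and no higher extension problems; and second, that such a spectral sequence exists and converges. The identification $\underline{\pi}_*^\dscr(\Sigma X)\cong\Sigma\underline{\pi}_*^\dscr(X)$ then turns the two surviving columns into precisely the displayed short exact sequence.

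The homological input is the routine part. Since $W$ has order two, the group ring $\qq[W]$ is semisimple, so $\Ch(\qq[W])$ and $\Ch(\qq)$ are semisimple as graded abelian categories; hence the families $(V_k)_{k\geqslant 1}$ and the complex $V_\infty$ contribute nothing on their own to the homological dimension. The only obstruction comes from the single structure map $\sigma_V\co V_\infty\to\tails(V)$, and an object of this ``comma--category'' shape admits a functorial injective resolution of length one: one takes $I^0$ to be the cofree object assembled from an injective hull of the target together with $V_\infty$, and lets $I^1$ be the cokernel, which is again injective because its structure map has become an isomorphism. This is the computation carried out in \cite{greo2}, and it shows $\ext^s_{\mathcal{A}(\dscr)}=0$ for $s\geqslant 2$.

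The substantial step --- and the one I expect to be the main obstacle --- is the topological realisation: one must show that every injective object of $\mathcal{A}(\dscr)$ is $\underline{\pi}_*^\dscr$ of some $S_\dscr$--module, that every map between such algebraic injectives is realised by a map of $S_\dscr$--modules, and that splicing a realisation of an injective resolution of $\underline{\pi}_*^\dscr(Y)$ yields a tower $Y\to Y^0\to Y^1$ whose associated spectral sequence has the $E_2$--page above. The structural fact that makes this work is that $\underline{\pi}_*^\dscr$ detects weak equivalences on $S_\dscr$--modules: a map of $S_\dscr$--modules is a weak equivalence exactly when it is an isomorphism on $\pi_*^H$ for every dihedral $H$ and on $\pi_*^{O(2)}$, and by Lemma \ref{lem:subgroupidem} the idempotents $e_{D_{2k}}$ together with the elements $f_n$ repackage exactly this information as the components of $\mathcal{A}(\dscr)$. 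The convergence and realisation are established in \cite{greo2}; I would also remark that once the monoidal Quillen equivalence between $S_\dscr\leftmod$ and the chain--level model of Section \ref{sec:dihedral} is available, the sequence follows more formally as the universal--coefficient spectral sequence in the algebraic model, which degenerates for exactly the injective--dimension--one reason recorded above.
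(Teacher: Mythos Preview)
Your proposal is correct and sketches the standard Adams--spectral--sequence argument that is carried out in \cite{greo2}. The paper itself does not prove this theorem: it is attributed to Greenlees, and the only justification given is the one--line remark that the homotopy category of $\mathcal{A}(\dscr)$ agrees with the algebraic model for dihedral spectra in \cite{greo2}. So there is nothing to compare against beyond the citation; your outline of injective dimension one plus topological realisation of injectives is precisely the shape of the argument in the cited reference.

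One small caution: your closing remark, that once the Quillen equivalence of Section~\ref{sec:dihedral} is in hand the sequence follows formally as a universal--coefficient spectral sequence, is true but would be circular as a proof \emph{within this paper}. The monoidal Quillen equivalence of Theorem~\ref{thm:dihedralconclusion} is established via Propositions~\ref{prop:topmorita}--\ref{thm:intrinsicform}, and the calculations underlying those (in particular the identification of maps between the generators $\sigma_H^i$) amount to knowing the values of $\hom_{\mathcal{A}(\dscr)}$ on the generators, which is essentially the degree--zero content of the Adams sequence. So treat that remark as an a posteriori consistency check rather than an independent route.
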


\subsection{Adjunctions and model structures}\label{subsec:adjunct}

We now introduce a particularly useful construction, ${\bignplus}$.
We will soon see that this construction is an explicit description
of the `global sections' of an object of $dg\acal(\dcal)$.

\begin{definition}
Let $N \geqslant 1$ and take $M \in dg\acal (\dcal)$.
Then $\bignplus_N M$ is defined as the following pullback
in the category of $\ch(\qq [W])$--modules.
\[\xymatrix@C+0.5cm{
\bignplus_N M
\ar[r]^{\alpha_{M,N}}
\ar[d] &
\prod_{k \geqslant N} M_k
\ar[d] \\
M_\infty
\ar[r] & \tails(M)
}\]
Define $\bignplus_N^W M$ to be $(\bignplus_N M)^W$, the $W$--fixed points of
$\bignplus_N M$.
\end{definition}

It follows immediately from the definition above that
$\bignplus_N$ and $\bignplus_N^W$ are exact functors.
Furthermore,  there are natural isomorphisms
\vskip-0.4cm
{\setstretch{1.3}
\[
\begin{array}{rcl}
\bignplus_N M & \cong & \bignplus_{N+1} M \oplus M_N
\\
M_\infty & \cong  & \colim_N \bignplus_N M
\\
\tails(M)^W & \cong & \colim_n \prod_{k \geqslant n} (M_k^W).
\end{array}
\]}

The notation $\bignplus_N$ is to make the reader think
of some combination of a direct product and a direct sum.
Indeed if $M_\infty=0$, then $\bignplus_N M = \bigoplus_{k \geqslant N} M_k$.
Whereas if $M_\infty=\tails(M)$ and $\sigma=\id$,
then $\bignplus_N M = \prod_{k \geqslant N} M_k$.

Our first use of ${\bignplus}_N$ is to give a construction of limits
in $\acal(\dcal)$.

\begin{lemma}
The category $dg\acal (\dcal)$ contains all small limits and colimits.
\end{lemma}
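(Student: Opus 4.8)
The plan is to build all small limits and colimits of $\mathcal{A}(\dscr)$ by hand, exploiting the fact that an object is nothing but an object $(V_k)_{k \geqslant 1}$ of the product category $\prod_{k \geqslant 1} \Ch(\qq[W])$, an object $V_\infty$ of $\Ch(\qq)$, and a structure map $\sigma_V \co V_\infty \to \tails(V)$; since $V_\infty$ carries the trivial $W$--action this last datum is the same as a map $V_\infty \to \tails(V)^W$ in $\Ch(\qq)$. Both $\Ch(\qq)$ and $\prod_{k \geqslant 1}\Ch(\qq[W])$ are complete and cocomplete, so the only thing to check is how the structure maps behave.

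Colimits are formed componentwise. For a small diagram $i \mapsto V^{(i)}$ put $(\colim_i V^{(i)})_k = \colim_i V^{(i)}_k$ and $(\colim_i V^{(i)})_\infty = \colim_i V^{(i)}_\infty$, with structure map the composite
\[
\colim_i V^{(i)}_\infty \xrightarrow{\ \colim_i \sigma_{V^{(i)}}\ } \colim_i \tails(V^{(i)}) \longrightarrow \tails(\colim_i V^{(i)}),
\]
whose second arrow is the canonical comparison map: interchange the two colimits in $\colim_i \tails(V^{(i)}) = \colim_i \colim_n \prod_{k \geqslant n} V^{(i)}_k$ and use, for each $n$, the canonical map $\colim_i \prod_{k \geqslant n} V^{(i)}_k \to \prod_{k \geqslant n} \colim_i V^{(i)}_k$ out of a colimit of products. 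Verifying the universal property is then a routine diagram chase with the naturality of this comparison map.

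The real content is the construction of limits. Since $\tails$ is assembled from a sequential colimit it does not commute with limits, so limits in $\mathcal{A}(\dscr)$ are not componentwise and a pullback correction is needed, in the spirit of the limit construction for $\acal(\torus)$ in Section~\ref{sec:cyclicmodel}. For a small diagram $i \mapsto V^{(i)}$ I would first set $L_k = \lim_i V^{(i)}_k$ for every $k \geqslant 1$, and then define $L_\infty$ to be the pullback in $\Ch(\qq)$ of the diagram
\[
\lim_i V^{(i)}_\infty \longrightarrow \lim_i \bigl(\tails(V^{(i)})^W\bigr) \longleftarrow \tails(L)^W,
\]
where the left-hand map is $\lim_i$ of the corestricted structure maps and the right-hand map is induced by the projections $L_k \to V^{(i)}_k$; the structure map $\sigma_L$ is then $L_\infty \to \tails(L)^W \hookrightarrow \tails(L)$. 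To identify $L$ with $\lim_i V^{(i)}$, observe that a cone from $W$ to the diagram supplies componentwise maps $W_k \to L_k$, a map $W_\infty \to \lim_i V^{(i)}_\infty$, and the map $W_\infty \xrightarrow{\sigma_W} \tails(W)^W \to \tails(L)^W$, and that the square expressing compatibility of a morphism of $\mathcal{A}(\dscr)$ with structure maps is precisely the assertion that the two resulting composites $W_\infty \rightrightarrows \lim_i \tails(V^{(i)})^W$ agree; the pullback property of $L_\infty$ then yields the unique factorisation $W \to L$, and uniqueness is inherited from the componentwise limits together with the pullback.

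The step I expect to be the main obstacle is the limit: one must pin down the correct pullback square and then keep track of three a priori different ``tails'' objects --- $\tails(L)$, $\lim_i \tails(V^{(i)})$ and $\tails(W)$ --- together with the comparison maps relating them, in order to check the universal property. This is bookkeeping rather than anything deep; the remaining ingredients (completeness and cocompleteness of $\Ch(\qq)$ and of $\prod_{k \geqslant 1}\Ch(\qq[W])$, and the componentwise nature of colimits) are entirely standard.
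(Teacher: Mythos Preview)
Your proposal is correct and in substance agrees with the paper's argument. Colimits are handled identically. For limits the paper sets $(\lim_i V^i)_k = \lim_i V^i_k$ and $(\lim_i V^i)_\infty = \colim_N \lim_i \bigl(\bignplus_N^W V^i\bigr)$, where $\bignplus_N^W V$ is the $W$--fixed part of the pullback $V_\infty \times_{\tails(V)} \prod_{k\geqslant N} V_k$; since filtered colimits in $\Ch(\qq)$ commute with finite limits, this colimit over $N$ collapses precisely to your single pullback $\lim_i V^{(i)}_\infty \times_{\lim_i \tails(V^{(i)})^W} \tails(L)^W$. So the two descriptions are isomorphic: yours is the more transparent one--step formulation, while the paper's packaging via $\bignplus_N^W$ has the advantage of foreshadowing the global--sections functors $\bignplus_N^W$ that are used immediately afterwards to set up the model structure.
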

\begin{proof}
Let $M^i$ be a small diagram of objects of $dg \acal (\dcal)$.
Define
\[
(\colim_i M^i)_\infty = \colim_i (M^i_\infty)
\quad \textrm{and} \quad
(\colim_i M^i)_k = \colim_i (M^i_k).
\]
The structure map for $\colim_i M^i$
is induced by the composite below.
\[
M^i_\infty \longrightarrow
\colim_n \prod_{k \geqslant n} M_k^i \longrightarrow
\colim_n \prod_{k \geqslant n} \colim_i M_k^i
\]
For limits, we define
\[
(\lim_i M^i)_k = \lim_i (M^i_k) \quad \textrm{and} \quad
(\lim_i M^i)_\infty = \colim_N \lim_i ({\bignplus}_N^W M^i).
\]
The structure map of $\lim_i M^i$
is the composite below, where the middle map is induced by
the maps
$\alpha_{M^i,N}^W \co {\bignplus}_N^W M^i \to \prod_{k \geqslant N} M_k^i$.
\[
(\lim_i M^i)_\infty =
\colim_N \lim_i \left ( {\bignplus}_N^W M^i \right) \longrightarrow
\colim_N \lim_i \prod_{k \geqslant N} M^i_k =
\tails (\lim_i M^i)
\]
It is routine to check that these constructions give the
colimit and limit.
\end{proof}

We will need the following fact to construct the model structure on
$\acal(\dcal)$.

\begin{lemma}\label{lem:globseccolim}
The functors
${\bignplus}_N$ and ${\bignplus}_N^W$
preserve filtered colimits
for all $N \geqslant 1$.
\end{lemma}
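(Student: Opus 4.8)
The plan is to isolate the one genuine obstruction here — infinite products do not commute with filtered colimits — inside a direct summand on which colimits \emph{are} harmless, by producing a natural short exact sequence.

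First I would record the short exact sequence
\[
0 \to \bigoplus_{k \geqslant N} V_k \to \bignplus_N V \to V_\infty \to 0
\]
of chain complexes of $\qq[W]$--modules, natural in $V \in \acal(\dscr)$. This comes from the observation that the canonical map $\prod_{k \geqslant N} V_k \to \tails(V) = \colim_n \prod_{k \geqslant n} V_k$ is surjective (every class in the colimit is already represented at stage $N$, by restriction) with kernel exactly $\bigoplus_{k \geqslant N} V_k$ (a family of elements dies in the colimit precisely when it has finite support). Since $\bignplus_N V$ is by definition the pullback of $\sigma_V \co V_\infty \to \tails(V)$ along this surjection, pulling back the short exact sequence $0 \to \bigoplus_{k \geqslant N} V_k \to \prod_{k \geqslant N} V_k \to \tails(V) \to 0$ along $\sigma_V$ yields the displayed sequence. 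Here I am using that pullbacks, products, direct sums and filtered colimits in $\Ch(\qq[W])$ are all computed degreewise, so that exactness can be checked degreewise.

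Next, given a filtered diagram $\{V^i\}$ in $\acal(\dscr)$, I would apply $\colim_i$ to the above sequence for the $V^i$. Filtered colimits are exact in $\Ch(\qq[W])$, so the result is again short exact; moreover $\colim_i$ commutes with the direct sum $\bigoplus_{k \geqslant N}(-)$, and by the construction of colimits in $\acal(\dscr)$ we have $(\colim_i V^i)_k = \colim_i V^i_k$ and $(\colim_i V^i)_\infty = \colim_i V^i_\infty$. Comparing the resulting short exact sequence with the one attached to $\colim_i V^i$, the two outer vertical comparison maps are the evident isomorphisms $\colim_i \bigoplus_{k} V^i_k \cong \bigoplus_{k} \colim_i V^i_k$ and the identity at infinity, so the short five lemma forces the middle map $\colim_i \bignplus_N V^i \to \bignplus_N(\colim_i V^i)$ to be an isomorphism, which is exactly the claim.

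The main thing to be careful about — and the conceptual heart of the argument — is precisely this kernel computation and the resulting short exact sequence: without it one is stuck trying to compare $\colim_i \prod_{k \geqslant N} V^i_k$ with $\prod_{k \geqslant N} \colim_i V^i_k$, which genuinely differ. Everything else is a formal consequence of exactness of filtered colimits together with the degreewise description of the constructions involved.
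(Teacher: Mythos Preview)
Your argument is correct. The short exact sequence
\[
0 \to \bigoplus_{k \geqslant N} V_k \to {\bignplus}_N V \to V_\infty \to 0
\]
is valid for exactly the reasons you give (the map $\prod_{k \geqslant N} V_k \to \tails(V)$ is surjective with kernel the finitely supported tuples, and pullback of a surjection is a surjection with the same kernel), and the five--lemma step goes through because filtered colimits in $\Ch(\qq[W])$ are exact and commute with direct sums, while colimits in $\acal(\dscr)$ are computed stalkwise.

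The paper takes a different, more bare--hands route: it simply asserts that one checks injectivity and surjectivity of the comparison map $\colim_i {\bignplus}_N V^i \to {\bignplus}_N(\colim_i V^i)$ directly by elements, ``first dealing with the term at infinity, then dealing with the finite number of terms that aren't determined by that term''. This is really the same decomposition you have isolated, but argued as an element chase rather than packaged as a short exact sequence. Your version has the advantage of making the structure explicit and replacing the chase by a single appeal to the five lemma; the paper's version has the advantage of brevity and of not needing to verify naturality of the comparison maps in the diagram of short exact sequences. Either way, the key observation is the same: the obstruction lives in the infinite product, and once one strips off $V_\infty$ only a direct sum remains.
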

\begin{proof}
One checks injectivity and surjectivity
of the canonical map
\[
\colim_i {\bignplus}_N M^i
\longrightarrow
{\bignplus}_N \colim_i M^i
\]
by first dealing with the term at infinity, then dealing with the finite
number of terms that are not determined
by the term at infinity.
\end{proof}

We introduce a collection of useful adjunctions
relating $dg\acal(\dcal)$ to rational chain complexes
and $W$--equivariant rational chain complexes.

\begin{definition}\label{def:manyadjoints}
Let $A$ be a rational chain complex, $R \in \ch(\qq[W])$
and $M \in dg\acal(\dcal)$.

Define $i_k R$ to be the object of $dg\acal(\dcal)$
with $(i_k R)_\infty =0$, $(i_k R)_n = 0$ for $n \neq k$ and
$(i_k R)_k = R$. Now define $p_k$ by setting $p_k M = M_k \in \ch(\qq[W])$.
Then $i_k$ is both right and left adjoint to $p_k$.
\[
i_k : \ch(\qq[W])  \adjunct
dg\acal(\dcal) : p_k \qquad \quad
p_k : dg\acal(\dcal) \adjunct
\ch(\qq[W])  : i_k
\]
Let $p_\infty M = M_\infty \in \ch(\qq)$
and define $(i_\infty A)_\infty = A$ and $(i_\infty A)_k = 0$. Then we have an adjunction
\[p_\infty : dg\acal(\dcal) \adjunct
\ch(\qq) : i_\infty .\]

We set $cA$ to be the object of $dg\acal(\dcal)$
with $cA_k = A = cA_\infty$ and structure map induced by the
diagonal map $A \to \prod_{k \geqslant 1} A$. Then we have
the `constant sheaf' and `global sections'
adjunction
\[c : \ch(\qq) \adjunct
dg\acal(\dcal) : {\bignplus}_1^W \]
\end{definition}

We put a model structure on $dg\acal(\dcal)$.
We use the functors $i_k$ and $c$ above to create the
generating sets.
Let $I_\qq$ and $J_\qq$ denote the sets of generating
cofibrations and
acyclic cofibrations for the projective model structure on
rational chain complexes, see \cite[Section 2.3]{hov99}.
Similarly we have generating sets $I_{\qq [W]}$ and $J_{\qq [W]}$ for
$\ch(\qq[W])$.

\begin{proposition}
Define a map $f$ in $dg\acal(\dcal)$ to be
a weak equivalence or fibration
if $f_\infty$ and each $f_k$ is
a homology isomorphism or surjection.
These classes define a cofibrantly generated and proper
model structure on the category $dg\acal (\dcal)$.

The generating cofibrations $I$ are the collections
$cI_\qq$ and $i_k I_{\qq [W]}$ for $k \geqslant 1$. The generating
acyclic cofibrations $J$ are
$cJ_\qq$ and $i_k J_{\qq [W]}$ for $k \geqslant 1$.
\end{proposition}
\begin{proof}
Lemma \ref{lem:globseccolim} shows that ${\bignplus}^W_1$
preserves filtered colimits. The required smallness conditions on
the generating sets follows immediately.

The rest of the proof is routine.
As an example of the kind of argument we need to make,
we prove that the acyclic fibrations
are the maps with the right lifting property with
respect to $I$. Let $f \co A \to B$ be such a map.
Using the adjunctions of
Definition \ref{def:manyadjoints} it follows that each $f_k \co A_k \to B_k$
is a surjection and a homology isomorphism,
as is $\bignplus_1^W f \co \bignplus_1^W A \to \bignplus_1^W B$.

Since $\bignplus_N^W A \cong \bignplus_{N+1}^W A \oplus A_N^W$
it follows that each $\bignplus_N^W f$ is a surjection and a homology isomorphism
for each $N \geqslant 1$.
Taking colimits over $N$ we see that $f_\infty$ is a
surjection and homology isomorphism.

Left properness is immediate because colimits are defined term--wise.
For right properness the only difficulty occurs at infinity,
but the same method as in the preceding paragraph suffices,
using exactness of $\bignplus_N^W$ to see that it preserves surjections
and homology isomorphisms.
\end{proof}

Note that with this model structure, the adjunctions
$(i_k, p_k)$ and $(c, \bignplus_N^W)$
are Quillen pairs between rational chain complexes
(with a $W$--action in the first case) and
$\acal(\dcal)$.

\begin{lemma}\label{lem:alggens}
The collection $i_k \qq[W]$ for $k \geqslant 1$
and $c \qq$ are a set of homotopically compact,
cofibrant and fibrant generators for this category.
\end{lemma}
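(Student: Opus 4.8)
The plan is to verify three separate properties for the collection $\mathcal{G} = \{ i_k \qq W \mid k \geqslant 1 \} \cup \{ c \qq \}$: that these objects are cofibrant, that they are fibrant, and that they are compact and generate. Cofibrancy is immediate from the previous proposition: each $i_k \qq W$ is $i_k$ applied to the cofibrant object $\qq W$ of $\Ch(\qq[W])$ (the group ring is cofibrant since $\qq[W]$ is a retract of the generating cells, or simply because it is a projective module concentrated in degree zero), and $i_k$ is a left Quillen functor; similarly $c\qq$ is $c$ applied to the unit $\qq$ of $\Ch(\qq)$, which is cofibrant, and $c$ is a left Quillen functor by the corollary. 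For fibrancy, I would observe that fibrations in $\mathcal{A}(\dscr)$ are the maps $f$ with each $f_k$ and $f_\infty$ surjective, so an object is fibrant exactly when its map to the terminal object has this property; the terminal object is the zero object since $\mathcal{A}(\dscr)$ is additive, so every object is fibrant. In particular $i_k \qq W$ and $c\qq$ are fibrant. (If one prefers, note that the flat model structure on sheaves over $\pscr$ has every object fibrant because every sheaf on $\pscr$ is flasque, as remarked in the text.)

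Next I would handle compactness, in the sense of \cite[Definition 2.1.2]{ss03stabmodcat} — homotopical compactness, meaning $[X, \coprod_i Y_i] \cong \oplus_i [X, Y_i]$ in the homotopy category. For $i_k \qq W$, using the adjunction $(p_k, i_k)$ — here $i_k$ is right adjoint to $p_k$ — together with the fact that $i_k \qq W$ is cofibrant and all objects are fibrant, one gets $[i_k \qq W, Y] \cong \pi_0$ of the mapping space, which is computed by $p_k$ applied to $Y$ evaluated against $\qq W$; since $p_k$ is a left adjoint it commutes with coproducts, and $\qq W$ is compact in $\Ch(\qq[W])$ (it is a bounded complex of finitely generated projectives), so compactness follows. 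For $c\qq$, I would use the adjunction $(c, \bignplus_1^W)$: $[c\qq, Y] \cong [\qq, \bignplus_1^W Y]$ in the respective homotopy categories, and by Lemma \ref{lem:globseccolim} the functor $\bignplus_1^W$ (more precisely $\bignplus_1$, whence also the $W$-fixed-point version after the routine check) preserves filtered colimits, hence coproducts; combined with compactness of $\qq$ in $\Ch(\qq)$ this gives compactness of $c\qq$.

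Finally, for the generation statement I would show that an object $Y$ with $[\Sigma^n G, Y] = 0$ for all $G \in \mathcal{G}$ and all $n \in \zz$ is weakly trivial. From $[\Sigma^n i_k \qq W, Y] = 0$ for all $n$ and the adjunction $(p_k, i_k)$, we get that $Y_k = p_k Y$ is acyclic in $\Ch(\qq[W])$ for each $k \geqslant 1$ (mapping out of shifts of $\qq W$ into a fibrant object detects homology). From $[\Sigma^n c\qq, Y] = 0$ and the adjunction $(c, \bignplus_1^W)$, we get $\bignplus_1^W Y$ acyclic in $\Ch(\qq)$. Now I would run the argument already used in the proof of the model structure proposition: acyclicity of each $Y_k$ gives acyclicity of each $Y_k^W$, hence of each $\bignplus_N^W Y$ (by the pullback defining $\bignplus_N^W$ and the fact that $\bignplus_1^W Y$ is acyclic, using the long exact sequence comparing $\bignplus_1^W$ with $\bignplus_N^W$ and a finite product of the $Y_k^W$), and taking the colimit over $N$ shows $Y_\infty$ is acyclic. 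Thus every $f_k$ and $f_\infty$ of the map $0 \to Y$ is a homology isomorphism, i.e. $Y$ is weakly trivial, which is the statement that $\mathcal{G}$ is a set of generators for the stable model category.

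The main obstacle I anticipate is the bookkeeping in the generation step at the $\infty$-stalk: one must transfer vanishing of homotopy classes out of $c\qq$ into acyclicity of $Y_\infty$, and this requires threading through the pullback squares defining $\bignplus_N^W$ and the colimit $Y_\infty = \colim_N \bignplus_N^W Y$ would only emerge if $Y$ were already known to lie in $\mathcal{A}(\dscr)$ with its structure map — which it is, so really the content is just that acyclicity of $\bignplus_1^W Y$ plus acyclicity of all $Y_k$ forces acyclicity of $Y_\infty$ via the $\colim_N$ description. This is the same computation appearing in the proof of the preceding proposition, so it is routine but is the one place where the peculiar "stalk-based sheaf" nature of $\mathcal{A}(\dscr)$ must be handled with care.
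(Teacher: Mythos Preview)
Your proposal is correct and follows essentially the same approach as the paper. The paper's proof is much terser: it omits cofibrancy and fibrancy entirely (these being immediate), declares compactness ``routine'', and for generation argues that once each $V_k$ is acyclic the homology of $\bignplus_1^W V$ is isomorphic to that of $V_\infty$, so acyclicity of the former forces acyclicity of the latter. Your route to the $\infty$-stalk via the short exact sequences $0 \to \prod_{k<N} Y_k^W \to \bignplus_1^W Y \to \bignplus_N^W Y \to 0$ and the identification $Y_\infty \cong \colim_N \bignplus_N^W Y$ is a slightly longer path to the same conclusion; the paper's direct observation that the pullback square defining $\bignplus_1^W Y$ has two acyclic corners once all $Y_k$ vanish is the shortcut you could take instead.
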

\begin{proof}
Every object of $dg\acal (\dcal)$ is fibrant and these objects are
images of cofibrant objects under left Quillen functors.
Homotopy compactness is simple to check for $i_k \qq[W]$.
For $c \qq$ it relies on the fact that $\bignplus_1^W$
commutes with arbitrary coproducts (as they are filtered
colimits of finite products).

Assume that $[\sigma, M]^{\acal(\dcal)}_*=0$
for each $\sigma$ in the collection.
It follows that each $M_k$ must be acyclic as
\[
0= [i_k \qq[W], M]^{\acal(\dcal)}_* \cong
[\qq[W], M_k]^{\ch(\qq[W])}_*
\]
It follows that the canonical map $M \to i_\infty M_\infty$,
which is the identity at infinity and zero elsewhere,
is a weak equivalence.
Thus we know that the graded groups
\[
[cQ, M]^{\acal(\dcal)}_*
\cong
[cQ, i_\infty M_\infty]^{\acal(\dcal)}_*
\cong
[\qq, M_\infty]^{\qq}_* \cong \h_*(M_\infty)
\]
are zero. Hence $M_\infty$ is acyclic
and $M \to 0$ is a weak equivalence.
\end{proof}

It is time we make our analogy to sheaves clear.
In particular this explains why the construction of limits
in $\acal(\dcal)$ are more complicated than colimits:
$\acal(\dcal)$ is essentially a category of sheaves
described in terms of stalks.

\begin{definition}
Let $\pscr$ be the space $\fcal O(2)/O(2) \setminus \{ SO(2) \}$
($\pscr$ for points). Let $\oscr$ be the constant sheaf of $\qq$ on $\pscr$
considered as a sheaf of rings.
Let $W \oscr \leftmod$ denote the category of
$W$--equivariant objects and $W$--equivariant maps in
$\oscr \leftmod$.
\end{definition}

To specify an $\oscr$--module $M$ one only needs to give
the stalks at the points $k$ and $\infty$ and a
map of $\qq$--modules $M_\infty \to \tails(M)$.
The global sections of $M$ are then given by
$\bignplus_1 M$. Hence any object of $dg\acal(\dcal)$
defines an object of $W \oscr \leftmod$.
We call this functor $\tinc$ and see that it is full and faithful.
Thus we can view $dg\acal(\dcal)$ as a full subcategory of
$W \oscr \leftmod$.

The inclusion functor
has a right adjoint called `$\tfix$'.
On a $W$--equivariant $\oscr$--module $M$, $\tfix(M)_k = M_k$,
$\tfix(M)_\infty = M_\infty^W$ and the structure map
is $M_\infty^W \to M_\infty \to \tails(M)$.
\[
\tinc : dg\acal(\dcal)
\adjunct
dg W \oscr \leftmod : \tfix
\]
In particular, one could also describe the limit of some diagram $M^i$
in $dg\acal (\dcal)$ as $\tfix \lim_i \text{inc } M^i$,
where the limit on the right is taken in the category of $W \oscr$--modules.

\begin{rmk}
The adjunction
$\tinc : dg\acal(\dcal)
\adjunct
W \oscr \leftmod : \tfix$
can be used to put a model structure
on $W \oscr \leftmod$. Define a map
$f$ to be a weak equivalence or fibration if
$\tfix f$ is. Then we have a new cofibrantly
generated model structure on  $W \oscr \leftmod$.
With this model structure
the adjunction $(\tinc,\tfix)$ becomes a Quillen equivalence.
\end{rmk}

\subsection{The dihedral comparison}

In this subsection we give the proof that $dg\acal(\dcal)$ and
$\dihedsp$ are Quillen equivalent.
Since we are not considering monoidal products,
we use the tilting theorem of Schwede and Shipley, \cite[Theorem 5.1.1]{ss03stabmodcat}.
Recall that a set of \textbf{tiltors} is a set of homotopically compact generators for the
homotopy category such that 
$[T,T']_*$ is concentrated in degree zero
for any $T$, $T'$ in the set.
One could use a similar method to that of \cite{barnesfinite}, but the
argument given below is somewhat simpler.

\begin{lemma}\label{lem:dihedraltiltors}
The model category $\dihedsp$ has a set of tiltors given by
the following countably infinite collection of cofibrant-fibrant objects.
Let $\fibrep_\dcal$ denote fibrant replacement in $\dihedsp$ and define
\[
\gcal_{top} = \{ \fibrep_\dcal S^0 \} \cup \{ \fibrep_\dcal e_H O(2)/H_+  \ | \ H \in \dcal \setminus \{O(2) \} \}.
\]
\end{lemma}
\begin{proof}
For any two dihedral subgroups $H$ and $K$,
the set of maps
\[ [O(2)/H_+ ,O(2)/K_+ ]^{\dcal O(2)}_*\]
is concentrated in degree zero,
where $[- ,-]_*^{\dcal O(2)}$ denotes maps in the homotopy
category of $\dihedsp$.
Hence it follows that, in the homotopy category, maps between elements of $\gcal_{top}$
are concentrated in degree 0.

To show that $\gcal_{top}$ generates, we must prove that if $X$ is an object of
$\dihedsp $ such that
$[\sigma ,X]_*^{\dcal O(2)}=0$
for all $\sigma \in \gcal_{top}$,
then $X \to *$ is a $\pi_*$--isomorphism.
Let $X \in \dihedsp $ be fibrant, by Theorem \cite[IV.6.13]{mm02}
$\pi_*^{H}(X)=0$ for any $H \in \ccal$
and we see immediately that
\[
\pi_*^{O(2)}(X) = [S^0 ,X]_*^{O(2)} = [S^0 ,X]_*^{\dcal O(2)}=0
\]
Let $H$ be a finite dihedral group.
By \cite[Examples C(i)]{greratmack}
there is a natural isomorphism
\[
\pi_*^H(X) \cong \bigoplus_{(K) \leqslant H} (e_K \pi_*^K(X))^{W_H K}
\]
Since we have assumed that
$e_K \pi_*^{K}(X)=[e_K O(2)/K_+ ,X]_*^{\dcal O(2)}$ is zero
for each finite dihedral $K$,
$\pi_*^H(X)=0$.
Hence our set generates the homotopy category.
Homotopy compactness follows from the isomorphisms
\[
[e_K O(2)/K_+, X]^{\dcal O(2)}_* = [e_K O(2)/K_+, X]^{O(2)}_* = e_K \pi_*^K(X)
\]
which hold whenever $X$ is fibrant in $\dihedsp$.
\end{proof}

We identify a ringoid $\underline{R}$ from our algebraic model and give an algebraic version of the
tilting result we want for dihedral spectra.

\begin{definition}
Define a set of objects $\gcal_a$ of $dg\acal(\dcal)$
\[
\gcal_a=\{ c \qq \} \cup \{ i_k \qq[W] \mid k \geqslant 1 \}.
\]
Let $\underline{R}$ denote the ringoid given by taking the full subcategory of $\acal(\dcal)$
on the object set $\gcal_a$, considered as a category
enriched over rational vector spaces.
A \textbf{module over $\underline{R}$} is a contravariant additive functor from
$\underline{R}$ to rational vector spaces.
\end{definition}

By Lemma \ref{lem:alggens}, $\gcal_a$ is a set of homotopically
compact cofibrant-fibrant generators for the homotopy category of $dg\acal(\dcal)$.
A standard variation of the tilting theorem (using rational chain complexes instead
of symmetric spectra) gives the following result.
\begin{proposition}\label{prop:algmorita}
The model category of chain complexes of modules over $\underline{R}$
(with fibrations the objectwise surjections and weak equivalences the objectwise
homology isomorphisms)
is Quillen equivalent to $dg\acal(\dcal)$.
\end{proposition}

We now prove that $\underline{R}$ is isomorphic to the endormorphism ringoid of
$\gcal_{top}$.

\begin{lemma}\label{lem:dihedralcalc}
The functor $\underline{\pi}_*^\dcal$ induces an isomorphism
of categories (enriched over rational vector spaces)
from the full subcategory of $\ho (\dihedsp)$
with object set $\gcal_{top}$ to $\underline{R}$.
\end{lemma}
\begin{proof}
This is a series of routine calculations using \cite{greo2}.
Let $H$ and $K$ be finite dihedral groups with $|H|=2k$ and $|K|=2m$
and let $\sigma_H = \fibrep_\dcal e_H O(2)/H_+$.
Then
\[
\underline{\pi}_*^\dcal(S_\dcal)  =  c \qq \quad \text{and} \quad
\underline{\pi}_*^\dcal(\sigma_H) = \underline{\pi}_*^\dcal(e_H O(2)/H_+)  =  i_k \qq[W]
\]
The functor $\underline{\pi}_*^\dcal$
from Definition \ref{def:dihedralmackey} gives maps as below.
These maps are isomorphisms by Theorem \ref{thm:dihedraladams}
as $i_k \qq[W]$ and $c \qq$ are projective, see the proof of \cite[Remark 4.3]{greo2}.
The equalities on the right hold as the objects of $\gcal_a$ are cofibrant, fibrant and concentrated
in degree zero.
\[
\begin{array}{rclll}
{[S_\dcal, S_\dcal]^{\dcal O(2)}_* }& \overset{\cong}{\longrightarrow} &
{[c \qq , c \qq]^{dg\acal(\dcal)}_* }
= \acal(\dcal) (c \qq , c \qq) \\
{[\sigma_H , S_\dcal]^{\dcal O(2)}_* }& \overset{\cong}{\longrightarrow} &
{[i_k \qq[W] , c \qq]^{dg\acal(\dcal)}_* }
= \acal(\dcal) (i_k \qq[W], c\qq)\\
{[S_\dcal, \sigma_H ]^{\dcal O(2)}_* }& \overset{\cong}{\longrightarrow} &
{[c \qq, i_k \qq[W]  ]^{dg\acal(\dcal)}_* }
= \acal(\dcal) (c\qq , i_k \qq[W])\\
{[\sigma_H , \sigma_H ]^{\dcal O(2)}_* }& \overset{\cong}{\longrightarrow}&
{[i_k \qq[W] , i_k \qq[W]  ]^{dg\acal(\dcal)}_* }
= \acal(\dcal) (i_k \qq[W] , i_k\qq[W])\\
{[\sigma_K , \sigma_H ]^{\dcal O(2)}_* }& \overset{\cong}{\longrightarrow} &
{[i_m \qq[W] , i_k \qq[W]  ]^{dg\acal(\dcal)}_* }
= \acal(\dcal) (i_m \qq[W] , i_k\qq[W]). \\
\end{array}
\]
\end{proof}

We can now give the classification theorem for dihedral spectra.

\begin{theorem}\label{thm:dihedralconclusion}
The model categories $\dihedsp$
and $dg\acal(\dcal)$ are Quillen equivalent.
Hence the homotopy categories of $\dihedsp$
and $dg\acal(\dcal )$ are equivalent.
\end{theorem}
\begin{proof}
The model category $\dihedsp$ is simplicial, cofibrantly generated, proper
and stable as these properties are preserved by the localisations we have applied to $O(2) \Sp$.

Lemma \ref{lem:dihedralcalc} gives an isomorphism of ringoids,
hence \cite[Theorem 5.1.1]{ss03stabmodcat} implies that
$\dihedsp$ is Quillen equivalent to the model category of chain complexes
of $\underline{R}$--modules.
The result then follows by Proposition \ref{prop:algmorita}.
\end{proof}

\paragraph{Acknowledgements}
The author would like to thank Constanze Roitzheim
for more help than it is possible to describe in a sentence,
John Greenlees for numerous productive conversations and helpful suggestions
and Magdalena K\c{e}dziorek for many helpful comments on a draft of this work.
The author also wishes to thank the anonymous referee for many helpful statements
that have shortened proofs and improved the paper.

The author most gratefully acknowledges funding from
the Engineering and Physical Sciences Research Council,
grant numbers  EP/H026681/1 and EP/M009114/1.

\addcontentsline{toc}{part}{Bibliography}
\bibliography{extendbib}
\bibliographystyle{alpha}

\end{document}